\newcommand{\PP}{\mathbb P}
\newcommand\CC{{\mathbb C}}
\newcommand\NN{{\mathbb N}}
\newcommand\ZZ{{\mathbb Z}}
\newcommand\QQ{{\mathbb Q}}
\newcommand\lrr{\langle}
\newcommand\rrr{\rangle} 
\newcommand\mathfraki{\alpha} 
\newcommand{\codim}{\operatorname{codim}}
\DeclareMathOperator{\sing}{\operatorname{sing}}
\DeclareMathOperator{\rank}{rank}
\begin{document}

\title{Non--factorial nodal complete intersection threefolds}

 \author{S{\l}awomir Cynk}
 \address{Institute of Mathematics, Jagiellonian University,
{\L}ojasiewicza 6, 30-348 Krak\'ow, Poland 
}
\address{Institute of Mathematics of the
Polish Academy of Sciences, ul. \'Sniadeckich 8, 00-956 Warszawa,
P.O. Box 21, Poland}
 \email{slawomir.cynk@uj.edu.pl}
 \author {S{\l}awomir Rams}
\address
{Institute of Mathematics,
Jagiellonian University,
ul. {\L}ojasiewicza 6,
30-348 Krak\'ow,
Poland}

\email{slawomir.rams@uj.edu.pl}
\thanks{Research partially supported by NCN grant no. N N201 608040.}
\subjclass[2010]
{Primary: {32S20};  Secondary {14M10}}

 \begin{abstract}
 We give a bound on the minimal number of singularities of a nodal projective complete intersection threefold 
which contains a smooth complete intersection surface that is not a Cartier divisor.
\end{abstract}

\maketitle

\newcommand{\XX}{X_{16}}
\newcommand{\Pl}{\Pi}
\newcommand{\reg}{\operatorname{reg}}
\theoremstyle{remark}
\newtheorem{obs}{Observation}[section]
\newtheorem{rem}[obs]{Remark}
\newtheorem{examp}[obs]{Example}
\theoremstyle{definition}
\newtheorem{defi}[obs]{Definition}
\theoremstyle{plain}
\newtheorem{prop}[obs]{Proposition}
\newtheorem{theo}[obs]{Theorem}
\newtheorem{lemm}[obs]{Lemma}
\newtheorem*{conj}{Conjecture}
\newtheorem{cor}[obs]{Corollary}
\newcommand{\ux}{\underline{x}}
\newcommand{\ud}{\underline{d}}
\newcommand{\ue}{\underline{e}}
\newcommand{\mmS}{{\mathcal S}}

\section*{Introduction}

The main subject of this consideration are non-factorial nodal
complete intersections in 
projective space. Our interest in 
nodal varieties is justified by the fact that an ordinary double point
(a node) is the most ubiquitous singularity that appears in analytic, algebraic
geometry and singularity theory. A remarkable property of three-dimensional
nodes is the existence of the so-called small resolution, i.e. an
analytic modification that replaces the singular point with one copy
of projective line. In general a small resolution exists only as an
analytic space. Certain global properties of a nodal variety
(e.g. existence of a K\"ahler small resolution, topological
invariants) are closely related to the structure of its class group of
Weil divisors. 
Locally analytically for every node there  exists  germ of a smooth
surface on the threefold that passes through the singularity. 
Is is a subtle question whether algebraic surfaces with such a
property 
exist on a given threefold.

A variety is called factorial if every Weil divisor on it is
Cartier. In the case of a complete intersection threefold $X$ in projective space
it means that each surface on $X$ is defined by a single homogeneous polynomial. 
 Cheltsov (\cite{cheltsov2}, \cite{cheltsov3}), 
 Cheltsov and Park (\cite{cheltsovpark}), Kosta (\cite{kosta},
 \cite{kosta2}) 
gave some lower bounds on the number of singular points on a
non-factorial threefold hypersurfaces in $\PP_{4}$ and complete  
intersections in $\PP_5$ in terms of degrees of defining equations. \smallskip

For a smooth surface in projective space a generic threefold complete
intersection of hypersurfaces of sufficiently high degrees that
contain the surface is nodal (see e.g. \cite{kapustka}).  
Our goal is to study this construction  in more detail in the case of
a complete intersection surface (Thm.~\ref{thm-jest}). 
 The number of nodes of the considered threefold can be computed using Chern classes (see
 Prop.~\ref{lem-eq-sing-on-the-surface}). Using the above fact and an
 elementary but quite tedious integer inequality
 (Lemma~\ref{lem-sde-properties}) we obtain our main result. 

\vskip3mm
\noindent
{\bf Theorem~\ref{thm-inequality}} 
{\it Let $X \subset \PP_{k+3}(\CC)$ be a nodal complete intersection threefold and let $I(X) = \left\langle f_{1}, \ldots, f_{k}\right\rangle$,
where the sequence $(\deg(f_{1}) \ldots,  \deg(f_{k}))$ is non--decreasing.
Assume that 
\begin{eqnarray*} 
& & \mbox{the set } V(f_{1}, \ldots, f_{s}) \mbox{ is smooth in codimension } 3 \mbox{ for } s \leq (k-1),
\end{eqnarray*}
If  a smooth complete intersection surface is not a Cartier divisor on $X$, 
then 

either 
$$
\# \sing(X) \geq \sum_{1\leq i \leq j \leq k} (\deg(f_i)-1) \cdot (\deg(f_j)-1)
$$

or \[\#\sing(X)\ge 2^{k-1} \text{ and }\quad k \in \{2, 3, 4\},\
   \deg(f_1) = \ldots = \deg(f_k) = 2\ .  \] }

\vskip3mm
\noindent
Motivated by the above result we formulate Conjecture (see p.~\pageref{conj}), which in the case of 
a complete intersection in $\PP_5$ is a
slightly stronger version of \cite[Conjecture 30]{cheltsov}
(see Remark~\ref{cheltsovniemaracji}). 

In the last section of the paper we construct numerous examples which show that all integer sequences
considered in Sect.~\ref{sect-main-result} can be realized geometrically with help of complete intersections
(Thm.~\ref{thm-jest}).

\section{An integer inequality} \label{sect-tech-prel}

Given two non--decreasing sequences of positive integers
$\underline{d}:=(d_1, \ldots, d_k)$ and 
$\underline{e}:=(e_1, \ldots, e_{k+1})$  
we define the following quantity
\begin{equation} \label{eq-def-sde}
\mmS(\underline{d}; \underline{e}) \, := \, e_1 \cdot \ldots \cdot
e_{k+1} \cdot \sum_{1 \leq i \leq j \leq k} (d_i - e_i) \cdot (d_j -
e_{j+1}) \, .
\end{equation}

The goal of this section is to prove an elementary inequality
involving   $\mmS(\cdot; \cdot)$ that will be useful in the sequel.
We work under the additional assumption that $(\underline{d}; \underline{e})$ satisfies the following conditions:
\begin{eqnarray}
d_i \geq e_{i\phantom{+1}} &  \mbox{ for } & i \leq k \, ,  \label{eq-cond-one} \\
d_i < e_{i+1} &  \Rightarrow & (d_1,\ldots, d_{i}) = (e_1,\ldots,
e_{i})  \, ,  \text{ for }1\le i\le k\, ,\label{eq-cond-two} \\
d_i < e_{i+2} &  \Rightarrow & (d_1,\ldots, d_{i})  \subset
(e_1,\ldots, e_{i+1}) \text { for }1\le i\le k-1\label{eq-cond-three} \, ,
\end{eqnarray}
where, by abuse of notation,  we write $(d_1, \ldots, d_i) \subset (e_1, \ldots, e_{i+1})$
iff $(d_1, \ldots, d_i)$ is a subsequence of $(e_1, \ldots, e_{i+1})$.
In particular, the conditions 
\eqref{eq-cond-one}, 
\eqref{eq-cond-two} imply that
each summand in $\mmS(\underline{d}; \underline{e})$ is
non-negative.
Moreover, by direct computation one obtains
\begin{equation} \label{eq-sde-vanishing}
\mmS(\underline{d}; \underline{e}) =  0 \quad \mbox{iff} \quad \underline{d} \subset  \underline{e} \, .
\end{equation}
Furthermore, we have the following elementary observation
\begin{equation} \label{eq-lem14}
\mbox{ if } d_i > e_{i+1} \mbox{ for } i=1,\ldots,k \quad  \mbox{ then
} \quad \mmS(\underline{d}; \underline{e}) \geq  \mmS(\underline{d};
1^{k+1}), 
\end{equation}
where,   to simplify  our notation, for an integer $c \in \NN$ we put
$c^k := \underbrace{c, \ldots,c}_{k\text{ times}}$. Indeed, the
inequalities  $1 \leq e_i \leq e_{i+1} < d_i$ imply that   
\begin{equation} \label{eq-trojmian}
e_i \cdot (d_i - e_i) \geq (d_i - 1) \quad \mbox{and} \quad e_{i+1} \cdot (d_i - e_{i+1})  \geq  (d_i - 1) \,  .
\end{equation}
Now \eqref{eq-lem14} follows from \eqref{eq-trojmian} and \eqref{eq-def-sde}.

After those preparations we can prove the main result of this section.
\begin{lemm} \label{lem-sde-properties}
Let $\underline{d}=(d_1, \ldots, d_k)$,  $\underline{e}=(e_1, \ldots, e_{k+1})$ 
be non--decreasing sequences of positive integers 
such that $d_1 \geq 2$ and the conditions 
\eqref{eq-cond-one}, 
\eqref{eq-cond-two},
\eqref{eq-cond-three}
are fulfilled. 
Then \\
either  \[\mmS(\underline{d}; \underline{e})=0\] \\
or  
\begin{equation} \label{eq-sde-inequality}
\mmS(\underline{d}; \underline{e}) \geq \mmS(\underline{d}; {1}^{k+1})
\end{equation} \\
or \[S(\ud;\ue)=2^{k-1}\ \text{ and }k \in \{2, 3, 4\},\ e_1=e_2=1, \quad
d_1 = \ldots = d_k = e_3 = \ldots e_{k+1}=2.\] 
\end{lemm}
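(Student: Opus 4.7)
My plan is to prove the lemma by strong induction on $k$. The base case $k=1$ is immediate from \eqref{eq-sde-vanishing} and \eqref{eq-lem14}: either $d_1\le e_2$, which combined with \eqref{eq-cond-one} and \eqref{eq-cond-two} forces $d_1\in\{e_1,e_2\}$, hence $\ud\subset\ue$ and $\mmS(\ud;\ue)=0$; or else $d_1>e_2$ and \eqref{eq-lem14} applies directly. No exceptional configuration arises for $k=1$.

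For the inductive step $k\ge 2$, I would distinguish three principal cases. If $\ud\subset\ue$, then $\mmS(\ud;\ue)=0$ by \eqref{eq-sde-vanishing}. If $d_i>e_{i+1}$ for every $i$, then \eqref{eq-lem14} gives \eqref{eq-sde-inequality} directly. Otherwise let $i_0=\min\{i:d_i\le e_{i+1}\}$ and split further according to whether $d_{i_0}<e_{i_0+1}$ or $d_{i_0}=e_{i_0+1}$.

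In the strict subcase $d_{i_0}<e_{i_0+1}$, condition \eqref{eq-cond-two} forces $(d_1,\ldots,d_{i_0})=(e_1,\ldots,e_{i_0})$; since $\ud\not\subset\ue$, this requires $i_0<k$. All summands with first index $\le i_0$ vanish, and the defining sum reduces to
$$\mmS(\ud;\ue)\;=\;e_1\cdots e_{i_0}\cdot\mmS(\ud^*;\ue^*),$$
where $\ud^*=(d_{i_0+1},\ldots,d_k)$ and $\ue^*=(e_{i_0+1},\ldots,e_{k+1})$. One checks that $(\ud^*,\ue^*)$ satisfies \eqref{eq-cond-one}--\eqref{eq-cond-three} with $\ud^*\not\subset\ue^*$, and that $e^*_1=e_{i_0+1}>e_{i_0}=d_{i_0}\ge 2$, so the exceptional alternative is ruled out in the inductive application. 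Hence $\mmS(\ud^*;\ue^*)\ge\mmS(\ud^*;1^{k-i_0+1})$; combining this with the prefactor bound $e_1\cdots e_{i_0}=d_1\cdots d_{i_0}\ge 2^{i_0}$ and the strict jump $d_{i_0+1}>d_{i_0}$ then yields \eqref{eq-sde-inequality} after an elementary computation.

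The main obstacle is the subcase $d_{i_0}=e_{i_0+1}$, where the exceptional configurations emerge. Condition \eqref{eq-cond-two} provides no direct simplification here; instead one must invoke the more subtle condition \eqref{eq-cond-three} to restrict the possible subsequence pattern of $(d_1,\ldots,d_{i_0})$ inside $(e_1,\ldots,e_{i_0+1})$. A careful case-by-case analysis then isolates the exceptional family $e_1=e_2=1$ and $d_1=\ldots=d_k=2=e_3=\ldots=e_{k+1}$ (which forces $i_0=2$), for which a direct calculation gives $\mmS(\ud;\ue)=2^{k-1}$ and $\mmS(\ud;1^{k+1})=\binom{k+1}{2}$; the strict inequality $2^{k-1}<\binom{k+1}{2}$ holds precisely for $k\in\{2,3,4\}$, which accounts for the range in the stated alternative.
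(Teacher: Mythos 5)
Your base case and the overall inductive frame match the paper's, and your strict subcase $d_{i_0}<e_{i_0+1}$ is a sound (if different) reduction: the factorization $\mmS(\ud;\ue)=e_1\cdots e_{i_0}\cdot\mmS(\ud^*;\ue^*)$ is correct, and the exceptional alternative is indeed excluded for the tail since $e^*_1=e_{i_0+1}>d_{i_0}\ge 2$. But even there you end with ``an elementary computation'': what is actually needed is
$$
\bigl(d_1\cdots d_{i_0}-1\bigr)\cdot\sum_{i_0<i\le j\le k}(d_i-1)(d_j-1)\ \ge\ \sum_{i\le i_0,\; i\le j\le k}(d_i-1)(d_j-1),
$$
a product-versus-quadratic estimate that is true but not immediate and must be proved. (The paper sidesteps it by peeling off one index at a time and using the exact identity $d_1\,\mmS(d_2,\dots,d_k;1^{k})=\mmS(\ud;1^{k},d_1)$ together with \eqref{eq-lem14}.)

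The genuine gap is your subcase $d_{i_0}=e_{i_0+1}$. This is where all of the content of the lemma lives --- it is exactly where the exceptional configurations arise --- and you dispose of it with ``a careful case-by-case analysis then isolates the exceptional family.'' That is a statement of the desired conclusion, not an argument: you give no reduction (no application of the induction hypothesis, no identity splitting the sum) that would make the case analysis finite, and you do not say how \eqref{eq-cond-three} is actually used. For comparison, in the corresponding situation ($d_1>e_1$) the paper replaces $e_1$ by $1$ via \eqref{eq-trojmian}, splits off the $i=1$ terms of the defining sum, applies the induction hypothesis to $(d_2,\dots,d_k;e_2,\dots,e_{k+1})$, and then handles each of the three possible outcomes separately: the inequality (recombined with the split-off terms), the vanishing case (which, after repeated use of \eqref{eq-cond-three}, forces either $\ud\subset\ue$ or $d_1=\dots=d_k=e_3=\dots=e_{k+1}$ with $e_2<d_1$, and then splits on $d_1\ge3$ versus $d_1=2$ to produce the exceptional family and the bound $2^{k-1}\ge\binom{k+1}{2}$ for $k\ge5$), and the exceptional outcome of the induction itself (three explicit sequences checked directly). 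None of this machinery appears in your proposal, so as written it does not establish the lemma.
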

\begin{proof} We will proceed by induction on $k$, the case $k=1$
  follows from  the observations  \eqref{eq-sde-vanishing} and
  \eqref{eq-lem14}.

\noindent
Now, assume that the lemma holds for $k-1$ and fix sequences $\underline{d}
$ and $ \underline{e}$ satisfying the assumptions of the lemma. 
We shall separately consider the case
$d_1 = e_1$. If $(d_2,\ldots, d_k) \subset (e_2,\ldots, e_{k+1})$, then 
$\mmS(\underline{d}; \underline{e})=0$ by
\eqref{eq-sde-vanishing}. Otherwise, 
as the sequences $(d_2,\ldots, d_k)$ and $(e_2,\ldots, d_{k+1})$ satisfy the
assumptions of the lemma, we can apply the induction hypothesis \eqref{eq-sde-inequality}  and
obtain
$$ 
\mmS(\underline{d}; \underline{e}) = d_1 \cdot \mmS(d_2,\ldots,
d_k;e_2,\ldots, e_{k+1}) \geq d_1 \cdot  \mmS(d_2,\ldots, d_k;
\underline{1}_{k}) 
=  \mmS(\ud; {1}^{k}, d_1) \, .
$$
Since the equality  $d_{k}=d_{1}$ would imply
$(\underline{d}\subset\underline{e})$, we have   $d_k > d_1$ and \eqref{eq-lem14} gives
$$
\mmS(\ud; {1}^{k}, d_1)  \geq \mmS(\ud; 1^{k+1}) \, .
$$
This completes the proof in the case $d_1 = e_1$.

It remains to consider the case  $d_1 > e_1$. The condition
\eqref{eq-cond-two} yields $d_i \geq e_{i+1}$ for each $i \leq k$,  
and using  \eqref{eq-trojmian} we obtain
\begin{equation} \label{eq-wymiana}
\mmS(\ud; \ue) \geq  \mmS(\ud;1, e_2, \ldots, e_{k+1}).
\end{equation}
By \eqref{eq-def-sde} we have
\begin{equation} \label{eq-suma} 
\mmS(\ud;1, e_2, \ldots, e_{k+1})
= e_2 \cdot  \ldots \cdot  e_{k+1} \cdot (d_1 - 1) \cdot 
\sum_{j=1}^k (d_j - e_{j+1}) +   \mmS(d_2,\ldots, d_k;e_2,\ldots,
e_{k+1}) \, .
\end{equation}
If $\sum_{j=1}^k (d_j - e_{j+1}) \le 0$, then  the inequalities $d_i \geq e_{i+1}$,
where $i=1,\ldots,k$, yield
 $\ud \subset \ue$. \\
Otherwise, repeated use of  \eqref{eq-trojmian}
gives 
\begin{equation} \label{eq-pskladnik}
e_2 \cdot  \ldots \cdot  e_{k+1} \cdot (d_1 - 1) \cdot  \sum_{j=1}^k (d_j - e_{j+1}) \geq
(d_1 - 1) \cdot (\sum_{j=1}^k d_j - k) \, .
\end{equation}

Now, we are in position to apply the induction hypothesis.

\noindent
Suppose that  $\mmS(d_2,\ldots, d_k;e_2,\ldots, e_{k+1}) \geq  \mmS(d_2,\ldots, d_k;{1}^{k})$.
Then \eqref{eq-wymiana}, \eqref{eq-suma}  and \eqref{eq-pskladnik} yield 
$$
\mmS(\ud; \ue) \geq (d_1 - 1) \cdot (\sum_{j=1}^k d_j - k) +  \mmS(d_2,\ldots, d_k;{1}^{k}) = \mmS(\ud; {1}^{k+1}) \, .
$$

\noindent
Assume, that $\mmS(d_2,\ldots, d_k;e_2,\ldots, e_{k+1})$ vanishes. Then, 
by \eqref{eq-sde-vanishing}, we have 
$$
(d_2,\ldots, d_k) \subset (e_2,\ldots, e_{k+1}). 
$$
Therefore, for $i = 2, \ldots,k$ we have either $d_i = e_i$ or $d_i = e_{i+1}$.  
Since $\ue$ is non--decreasing, we obtain  
$$
d_i \leq e_{i+1} \mbox{ for } i = 2, \ldots, k.
$$ 
On the other hand, the assumption $d_1 > e_1$ combined with  \eqref{eq-cond-two} gives $d_i \geq e_{i+1}$
for $i=1,\ldots,k$. Thus we have  $d_i = e_{i+1}$ for $i=2,\ldots,k$.

\noindent
If $d_{k-1} < d_k = e_{k+1}$, then the condition  \eqref{eq-cond-three} gives  
$(d_1,\ldots, d_{k-1}) \subset (e_1, e_2, d_2, \ldots, d_{k-1}).$ 
Since $d_1 > e_1$, we have $d_1 = e_2$ and $\ud \subset \ue$. Consequently, 
by \eqref{eq-sde-vanishing},
we can restrict our attention to the case  $d_{k-1} = d_k$. 
Repeating this reasoning 
we obtain that either $\mmS(\underline{d}; \underline{e})=0$ or
$$
d_1 = \ldots = d_k = e_3 = \ldots = e_{k+1} \mbox{ and } e_2 < d_1 \, .
$$
Suppose that $d_1 \geq 3$. Using  \eqref{eq-trojmian} we obtain the inequalities 
$$
\mmS(d_1, \ldots, d_1; e_1, e_2, d_1, \ldots, d_1) \geq d_1^{k-1} \cdot (d_1 - 1)^2 \geq \binom{k+1}{2} \cdot (d_1 - 1)^2 =
\mmS(d_1, \ldots, d_1; {1}^{k+1}) \, .
$$
For $d_1 = 2$ we get $e_1 = e_2 = 1$, so 
\[\mmS(2^{k};1,1,2^{k-1})=2^{k-1}\ge
\binom{k+1}2=\mmS(2^{k};1^{k+1})\quad\text{for } k\ge5.\]

Finally, assume that 
$$
(d_2,\ldots, d_k;e_2,\ldots, e_{k+1}) \in \{ (2^{2};1^{2},2),  (2^{3};1^{2},2^{2}),
(2^{4};1^{2},2^{3}) \}  
$$ is one of the exceptional cases. Then $d_{1}=2,\ e_{1}=1$, so
$(\ud,\ue) \in \{ (2^{3};1^{3},2),  (2^{4};1^{3},2^{2}),
(2^{5};1^{3},2^{3}) \}$. By direct computation
\eqref{eq-sde-inequality} holds in these cases.
\end{proof}

\section{Main result} \label{sect-main-result}

In this section we prove the main result of this note

\begin{theo} \label{thm-inequality} 
Let $X \subset \PP_{k+3}(\CC)$ be a nodal complete intersection threefold and let $I(X) = \left\langle f_{1}, \ldots, f_{k}\right\rangle$,
where the sequence $(\deg(f_{1}) \ldots,  \deg(f_{k}))$ is non--decreasing.
Assume that 
\begin{eqnarray} 
& & \mbox{the set } V(f_{1}, \ldots, f_{s}) \mbox{ is smooth in codimension } 3 \mbox{ for } s \leq (k-1),  \label{eq-codimthree} 
\label{eq-smooth-fourfold}
\end{eqnarray}
If  a smooth complete intersection surface is not a Cartier divisor on $X$, 
then 

either 
$$
\# \sing(X) \geq \sum_{1\leq i \leq j \leq k} (\deg(f_i)-1) \cdot (\deg(f_j)-1)
$$

or \[\#\sing(X)\ge 2^{k-1} \text{ and }\quad k \in \{2, 3, 4\},\
   \deg(f_1) = \ldots = \deg(f_k) = 2\ .  \]
\end{theo}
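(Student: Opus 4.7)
The strategy is to reduce the theorem to the combinatorial Lemma~\ref{lem-sde-properties}, with Proposition~\ref{lem-eq-sing-on-the-surface} serving as the bridge between the number of nodes of $X$ lying on $S$ and the invariant $\mmS(\ud;\ue)$. To set up, I would let $S\subset X$ be the smooth complete intersection surface assumed not to be Cartier on $X$, cut out by forms $g_1,\ldots,g_{k+1}$ of non--decreasing degrees $\ue=(e_1,\ldots,e_{k+1})$, and write $\ud=(d_1,\ldots,d_k)$ for $d_i=\deg f_i$. The containment $S\subset X$ gives $\langle f_1,\ldots,f_k\rangle\subset \langle g_1,\ldots,g_{k+1}\rangle$, hence an expression $f_i=\sum_j a_{ij}g_j$ with $\deg a_{ij}=d_i-e_j$ (and $a_{ij}=0$ whenever $d_i<e_j$). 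We may also assume $d_1\geq 2$, otherwise $X$ reduces to a complete intersection in a smaller projective space.

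The most delicate step is verifying that the pair $(\ud,\ue)$ satisfies the hypotheses \eqref{eq-cond-one}--\eqref{eq-cond-three} of the lemma. Condition \eqref{eq-cond-one} follows from a codimension count: if $d_i<e_i$, then every $f_j$ with $j\leq i$ has degree $<e_i\leq e_\ell$ for $\ell\geq i$, so $a_{j\ell}=0$ for such $\ell$ and $f_1,\ldots,f_i$ all lie in $\langle g_1,\ldots,g_{i-1}\rangle$; hence the codimension-$i$ complete intersection $V(f_1,\ldots,f_i)$ sits inside the codimension-$(i-1)$ locus $V(g_1,\ldots,g_{i-1})$, which is absurd. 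Conditions \eqref{eq-cond-two} and \eqref{eq-cond-three} arise from the same containment argument applied one level up, combined with a Bezout comparison of the nested complete intersections $V(f_1,\ldots,f_i)\supset V(g_1,\ldots,g_{i+1})$; here the smoothness-in-codimension-$3$ hypothesis \eqref{eq-codimthree} excludes embedded or non-reduced components and so forces equality of the relevant subvarieties, which combined with $d_j\geq e_j$ pins down the subsequence relations. This combinatorial-geometric bookkeeping is where I expect the main technical obstacle to lie.

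Once the hypotheses of Lemma~\ref{lem-sde-properties} are in place, Proposition~\ref{lem-eq-sing-on-the-surface} identifies the number of nodes of $X$ lying on $S$ with $\mmS(\ud;\ue)$; as a sanity check, for $k=1$ this recovers the direct Bezout count $e_1 e_2 (d_1-e_1)(d_1-e_2)$ of the zero locus of $(a_{11},a_{12})$ on $S\subset\PP_4$. Since $S$ is not Cartier on $X$, no change of minimal generators of the ideal of $S$ can include $f_1,\ldots,f_k$; equivalently $\ud\not\subset\ue$, and by \eqref{eq-sde-vanishing} we obtain $\mmS(\ud;\ue)>0$.

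Lemma~\ref{lem-sde-properties} now yields one of two alternatives, matching exactly the two alternatives of the theorem: either $\mmS(\ud;\ue)\geq \mmS(\ud;1^{k+1})=\sum_{1\leq i\leq j\leq k}(d_i-1)(d_j-1)$, or $\mmS(\ud;\ue)=2^{k-1}$ with $k\in\{2,3,4\}$ and $d_1=\cdots=d_k=2$. In either case the trivial inequality $\#\sing(X)\geq \#\bigl(\sing(X)\cap S\bigr)=\mmS(\ud;\ue)$ gives the desired bound.
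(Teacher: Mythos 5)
Your overall strategy coincides with the paper's: reduce to Lemma~\ref{lem-sde-properties} by showing that the degree sequences $(\ud;\ue)$ satisfy \eqref{eq-cond-one}--\eqref{eq-cond-three}, and use Prop.~\ref{lem-eq-sing-on-the-surface} to identify the node count on $S$ with $\mmS(\ud;\ue)$. However, the step you yourself flag as the main obstacle --- verifying \eqref{eq-cond-two} and \eqref{eq-cond-three} --- is left essentially unproved, and the sketch you offer (``a Bezout comparison \ldots\ excludes embedded or non-reduced components and so forces equality of the relevant subvarieties'') does not point at arguments that work. The paper's Lemma~\ref{lem-ineq-sing} does the following. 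For \eqref{eq-cond-two}: write $f_l=\sum_{j\le i}h_{l,j}g_j$ and observe that, in local coordinates extending $g_1,\ldots,g_i$, the Jacobian of $(f_1,\ldots,f_i)$ degenerates along $V(g_1,\ldots,g_i,\det H)$; if $\deg\det H>0$ this is a codimension-one component of $\sing V(f_1,\ldots,f_i)$, contradicting \eqref{eq-codimthree}, so $\sum_j(d_j-e_j)=\deg\det H=0$ and part (I) forces termwise equality. For \eqref{eq-cond-three}: the relevant locus is the rank-$\le(i-1)$ degeneracy locus $Z$ of the $i\times(i+1)$ matrix $H$ on $V(g_1,\ldots,g_{i+1})$; by the dimension theorem for determinantal loci \cite[Prop.~17.25]{h}, either $Z=\emptyset$ or $\dim Z=k-i$, and the latter again violates \eqref{eq-codimthree}; hence $V(g_1,\ldots,g_{i+1})$ is a Cartier divisor on the complete intersection $V(f_1,\ldots,f_i)$ of dimension $\ge 3$, and the Grothendieck--Lefschetz type result \cite[Cor.~IV.3.2]{hs-a} shows it is cut out by a hypersurface, which is what yields $(d_1,\ldots,d_i)\subset(e_1,\ldots,e_{i+1})$. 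No form of Bezout produces this subsequence relation, so this is a genuine gap in your argument.

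Two smaller points. In your proof of \eqref{eq-cond-one} the inclusion runs the wrong way: $\langle f_1,\ldots,f_i\rangle\subset\langle g_1,\ldots,g_{i-1}\rangle$ gives $V(g_1,\ldots,g_{i-1})\subset V(f_1,\ldots,f_i)$, and the contradiction is that a set of dimension at least $k+4-i$ lies inside one of dimension $k+3-i$; as you state it (the codimension-$i$ locus sitting inside the codimension-$(i-1)$ locus) there is no contradiction at all. Second, your derivation of $\mmS(\ud;\ue)>0$ rests on an unproved ``equivalence'' between $S$ being non-Cartier and $\ud\not\subset\ue$ via minimal generators; the paper's route is simpler and airtight: a Weil divisor is automatically Cartier at smooth points of $X$, so $S$ non-Cartier forces a node of $X$ on $S$, and Prop.~\ref{lem-eq-sing-on-the-surface} then gives $\mmS(\ud;\ue)=\#\left(\sing(X)\cap S\right)\ge 1$. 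Your final reduction to Lemma~\ref{lem-sde-properties}, including the normalization $d_1\ge 2$ by non-degeneracy and the identification $\mmS(\ud;1^{k+1})=\sum_{i\le j}(d_i-1)(d_j-1)$, matches the paper.
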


\begin{rem} \label{rem-kufactorial}
The local class group of a node is $\ZZ$ (see \cite{milnor}), so the nodal threefold 
is factorial iff it is $\QQ$-factorial. Consequently, Thm~\ref{thm-inequality} gives a bound for non-$\QQ$-factorial 
complete intersections. For the sake of simplicity in this paper we formulate all results using the notion of factoriality.
\end{rem}

\begin{rem} \label{rem-degrees}
Obviously the equations defining a complete intersection are not
unique, whereas their   degrees are uniquely determined. The latter can be
computed f.i. from the minimal resolution. 
\end{rem}

\noindent
In order to simplify our notation, for a nodal complete intersection threefold $X  \subset \PP_{k+3}(\CC)$ and a smooth 
complete intersection surface $S \subset X$ 
from now on we put  
\begin{eqnarray} \label{eq-ordereddegrees} 
I(X) = \left\langle f_{1}, \ldots, f_{k}\right\rangle  \, \, \, \,
\quad &\mbox{ with }& \quad \deg(f_1) \leq \ldots \leq \deg(f_{k})  \, , 
\label{eq-ordereddegreesI}  \\
I(S) = \left\langle g_{1}, \ldots, g_{k+1}\right\rangle  \quad &\mbox{ with }& \quad \deg(g_1) \leq \ldots \leq \deg(g_{k+1}) \, .
\label{eq-ordereddegreesII} 
\end{eqnarray}

The proof of Thm~\ref{thm-inequality}  will be preceded by a proposition and a lemma.
\begin{prop} {\rm (cf. \cite[Ex.~5]{cynk-konf})} \label{lem-eq-sing-on-the-surface}
Let $X \subset \PP_{k+3}(\CC)$  be a nodal complete intersection threefold
and let $S \subset X$ 
be  a smooth complete intersection surface. Then $X$ has 
exactly 
$$
\mmS(\deg(f_{1}), \ldots, \deg(f_{k}); \deg(g_{1}), \ldots, \deg(g_{k+1}))
$$
nodes on $S$.
\end{prop}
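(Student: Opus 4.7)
The plan is to identify the nodes of $X$ on $S$ with a Thom--Porteous degeneracy locus on $S$ and then compute its degree via Chern classes. Write $d_i := \deg f_i$ and $e_j := \deg g_j$. Since $I(X)\subset I(S)$, each $f_i$ has an expression $f_i = \sum_{j=1}^{k+1} a_{ij} g_j$ with $a_{ij}$ homogeneous of degree $d_i - e_j$ (taken to be $0$ if this would be negative), and the matrix $(a_{ij})$ yields a map of bundles on the smooth surface $S$:
\[
\Phi : E := \bigoplus_{i=1}^{k} \mathcal{O}_S(-d_i) \longrightarrow F := \bigoplus_{j=1}^{k+1} \mathcal{O}_S(-e_j).
\]
Differentiating $f_i = \sum_j a_{ij} g_j$ at any $p \in S$ and using $g_j(p)=0$ gives $df_i|_p = \sum_j a_{ij}(p)\,dg_j|_p$; since $S$ is a smooth complete intersection the covectors $dg_1|_p,\ldots,dg_{k+1}|_p$ are linearly independent, so $\rank(df_1,\ldots,df_k)(p) = \rank \Phi(p)$. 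Thus $\sing(X)\cap S$ equals, set-theoretically, the degeneracy locus $D_{k-1}(\Phi) = \{\rank \Phi \leq k-1\}$, which has expected codimension $2$ in $S$.

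Next I would check that every node of $X$ on $S$ contributes length exactly $1$ to $D_{k-1}(\Phi)$. Fix a node $p$ and, after a linear change of the $f_i$, assume $df_1|_p,\ldots,df_{k-1}|_p$ are independent and $df_k|_p = 0$, so that $V := V(f_1,\ldots,f_{k-1})$ is a smooth $4$-fold germ at $p$. Local row and column operations on $A$ (over $\mathcal{O}_{S,p}$, lifted to a neighbourhood of $p$) put the matrix in the block form
\[
A \;\sim\; \begin{pmatrix} I_{k-1} & 0 \\ 0 & (b_1, b_2) \end{pmatrix},
\]
with $b_1(p) = b_2(p) = 0$, and after replacing the last generator of $I(X)$ and two generators of $I(S)$ by the corresponding combinations one obtains $f_k = b_1 \tilde g_1 + b_2 \tilde g_2$ locally, with $\tilde g_1, \tilde g_2$ cutting $S$ out of $V$. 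The Hessian of $f_k$ at $p$ restricted to $T_pV$ is then $db_1|_p\otimes_s d\tilde g_1|_p + db_2|_p\otimes_s d\tilde g_2|_p$, and since $d\tilde g_1|_p, d\tilde g_2|_p$ span $N^*_{S/V,p}$, its non-degeneracy on the $4$-dimensional $T_pV$---the precise $A_1$ condition at $p$---translates to the independence of $db_1|_p, db_2|_p$ modulo $N^*_{S/V,p}$, i.e.\ in $T^*_pS$. Hence $(b_1,b_2)$ is a regular system of parameters on $S$ at $p$, and $D_{k-1}(\Phi)$ is reduced of length $1$ at each node. This translation of the $A_1$ condition is the step I expect to need the most care.

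Having established multiplicity $1$, the Thom--Porteous formula for $\rank E = k$, $\rank F = k+1$ in codimension $2$ gives
\[
\#\bigl(\sing(X)\cap S\bigr) \;=\; \deg\,c_2(F-E).
\]
Using $c(E) = \prod_i(1 - d_i H)$, $c(F) = \prod_j(1 - e_j H)$ and $H^2 \cdot S = e_1 \cdots e_{k+1}$, a direct expansion of $c(F)/c(E)$ modulo $H^3$ yields
\[
\deg\,c_2(F-E) \;=\; e_1\cdots e_{k+1}\bigl(h_2(\ud) - \sigma_1(\ud)\sigma_1(\ue) + \sigma_2(\ue)\bigr),
\]
and an elementary rearrangement of the double sum in \eqref{eq-def-sde} confirms this equals $\mmS(\ud;\ue)$.
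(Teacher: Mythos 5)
Your proof is correct, but it takes a genuinely different route from the paper's. The paper blows up $\PP_{k+3}$ along $S$, notes that the strict transform $\tilde{X}$ is again a complete intersection giving a small resolution of exactly the nodes of $X$ lying on $S$, and extracts $\nu=\#(\sing(X)\cap S)$ from the bookkeeping between topological Euler characteristics and degrees of Fulton--Johnson classes, quoting \cite[Prop.~1]{cynk-konf} for the evaluation of $\tilde{e}(\tilde{X})-\tilde{e}(X)$ as the polynomial $\mmS$. You instead realize $\sing(X)\cap S$ as the Thom--Porteous degeneracy locus $D_{k-1}(\Phi)$ of the presentation matrix $(a_{ij})$, check reducedness at each node via the local normal form $f_k=b_1\tilde g_1+b_2\tilde g_2$, and evaluate $\deg c_2(F-E)$; your symmetric-function identity $\sum_{i\le j}(d_i-e_i)(d_j-e_{j+1})=h_2(\ud)-\sigma_1(\ud)\sigma_1(\ue)+\sigma_2(\ue)$ is correct, as is the equivalence of the $A_1$ condition with $db_1|_p,db_2|_p$ being independent modulo $N^*_{S/V,p}$ (the Hessian $db_1\odot d\tilde g_1+db_2\odot d\tilde g_2$ is nondegenerate on the four-dimensional $T_pV$ precisely when the four covectors form a basis). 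Your route is self-contained, puts a natural scheme structure on $\sing(X)\cap S$ that is visibly reduced, and makes clear that no hypothesis of the form $\deg f_i\ge\deg g_j$ (which appears in the formula the paper cites) is needed; the paper's route avoids all local matrix analysis, since the multiplicity-one issue is absorbed into the fact that a small resolution replaces each node by one rational curve and the Milnor number of a node is one. Two minor points of care in your write-up: the ``linear change of the $f_i$'' should be a change of local generators of $I(X)$ in $\mathcal{O}_{\PP_{k+3},p}$ (constant-coefficient combinations of forms of different degrees are not homogeneous), which is harmless because your row and column operations are anyway performed over the local ring and only require $\rank A(p)=k-1$, itself a consequence of $\dim T_pX=4$ at a node; and the Thom--Porteous class computes the fundamental cycle of the degeneracy scheme because that scheme is finite (as $X$ is nodal), hence of expected codimension and Cohen--Macaulay.
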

\begin{proof} Let $X := Y_{1} \cap \ldots \cap Y_{k}$, where
  $Y_{i}=V(f_{i})$. Let $\sigma: \, \tilde{\PP}_{k+3} \rightarrow \PP_{k+3} $
be the blow-up of $\PP_{k+3}$ along the smooth surface $S$ and let
$\tilde{X}$
be the strict transform of $X$. Then $\tilde{X}$ is again a complete
intersection, i.e. 
$\tilde{X} = \tilde{ Y_{1}} \cap \ldots  \cap \tilde{Y_{k}}$, where
$\tilde{Y_{i}}$ is the strict transform of $Y_{i}$. Since $\tilde{X}$
is a small resolution of the nodes of $X$ that lie on $S$, we have
$$
\tilde{e}(\tilde{X}) = e(X) + \nu \, ,
$$
where $\nu := \# (\sing(X) \cap S)$ and $e$ (resp. $\tilde{e}$)
denotes the
topological Euler characteristic (resp.
degree of the Fulton-Johnson class) as in  \cite{cynk-konf}. On
the other hand we have
\begin{eqnarray*}
&&e(X) = \tilde{e}(X) + \#\operatorname{sing}(X),\\
&&e(\tilde X)=\tilde{e}(\tilde X) + \#\operatorname{sing}(X)-\nu,
\end{eqnarray*}
because the Milnor number of a node is one. Consequently
$$
 \nu = \tfrac{1}{2} (\tilde{e}(\tilde{X}) - \tilde{e}(X)) \, .
$$
By \cite[Prop.~1]{cynk-konf} the number $(\tilde{e}(\tilde{X}) -
\tilde{e}(X))$
is a polynomial in $\deg(f_{1}), \ldots,\deg(f_{k})$,   $\deg(g_{1}),
\ldots$, $\deg(g_{k+1})$
and equals 
$$
\mmS(\deg(f_{1}),
 \ldots, \deg(f_{k}); \deg(g_{1}), \ldots, \deg(g_{k+1}))
$$
if $\deg(f_{i}) \geq\deg(g_{j})$ for all $1 \leq i \leq k$ and 
$1 \leq j \leq (k+1)$. Consequently, the claim of the lemma holds.  
\end{proof} 
In the next lemma we maintain the notation \eqref{eq-ordereddegreesI}, \eqref{eq-ordereddegreesII}.
\begin{lemm} \label{lem-ineq-sing}
Let $X \subset \PP_{k+3}(\CC)$  be a nodal complete intersection threefold, such that  
\eqref{eq-codimthree} holds, 
and let $S \subset X$ 
be  a smooth complete intersection surface.\\
{\rm (I)} For $j \leq k$ one has the inequality:
\begin{equation*} \label{eq-slabestopnie}
\deg(f_j) \geq \deg(g_j) \, .
\end{equation*}
{\rm (II)} If $\deg(f_i) < \deg(g_{i+1})$ for an integer $i \leq k$, 
then
$$
(\deg(f_1), \ldots, \deg(f_i)) =  (\deg(g_1), \ldots, \deg(g_{i})) \, .
$$
{\rm (III)} If $\deg(f_i) < \deg(g_{i+2})$ for an integer $i \leq (k-1)$, 
then
$$
(\deg(f_1), \ldots, \deg(f_i)) \subset   (\deg(g_1), \ldots, \deg(g_{i+1})) \, .
$$
\end{lemm}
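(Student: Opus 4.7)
The plan is to exploit the inclusion $I(X) \subset I(S)$, which gives $f_{i'} = \sum_{j=1}^{k+1} a_{i'j} g_j$ with $a_{i'j}$ homogeneous of degree $\deg(f_{i'}) - \deg(g_j)$ (or zero if this quantity is negative), and then to apply the codimension inequality $I_1 \subset I_2 \Rightarrow \codim I_1 \leq \codim I_2$ at three successive levels of geometric refinement.

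For (I), if $\deg(f_j) < \deg(g_j)$ then degree bookkeeping forces $a_{i'l} = 0$ for every $i' \leq j$ and $l \geq j$, placing the codimension-$j$ ideal $(f_1, \ldots, f_j)$ inside the codimension-$(j-1)$ ideal $(g_1, \ldots, g_{j-1})$, contradiction. For (II), the hypothesis $\deg(f_i) < \deg(g_{i+1})$ likewise gives $(f_1, \ldots, f_i) \subset (g_1, \ldots, g_i)$, both of codimension $i$. I would then show the two ideals coincide: assumption~\eqref{eq-codimthree} (for $i \leq k-1$) or nodality (for $i = k$) together with Cohen--Macaulayness of the complete intersection makes $V(f_1, \ldots, f_i)$ normal by Serre's $R_1 + S_2$ criterion and irreducible by Hartshorne's connectedness theorem; hence $V(g_1, \ldots, g_i)$ is a closed subset of the same pure dimension inside an irreducible reduced variety, so the two must agree as sets, and passing to radicals promotes this to ideal equality. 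Uniqueness of degrees in a minimal graded generating set (Nakayama) then yields $(\deg(f_1), \ldots, \deg(f_i)) = (\deg(g_1), \ldots, \deg(g_i))$.

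For (III) the degree argument gives $(f_1, \ldots, f_i) \subset (g_1, \ldots, g_{i+1})$; if $\deg(f_i) < \deg(g_{i+1})$ part (II) closes the case at once, since an equality is already a subsequence of $(\deg(g_1), \ldots, \deg(g_{i+1}))$. The residual case $\deg(g_{i+1}) \leq \deg(f_i) < \deg(g_{i+2})$ is the main obstacle. My approach is to view the codimension-$1$ subscheme $V(g_1, \ldots, g_{i+1})$ of the normal irreducible variety $W := V(f_1, \ldots, f_i)$ as an effective Weil divisor. On the smooth locus of $W$ (whose complement has codimension $\geq 3$) the ideal $(g_1|_W, \ldots, g_{i+1}|_W)$ is locally principal, and Grothendieck--Lefschetz for $\operatorname{Pic}(W)$ should yield a common factor $h \in H^0(W, \mathcal{O}_W(m))$ with $g_j|_W = h \cdot u_j$. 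The Bezout identity $m \cdot \prod_{i'} \deg(f_{i'}) = \prod_j \deg(g_j)$, the non-negativity $\deg(u_j) = \deg(g_j) - m \geq 0$, and the compatibility $f_{i'} = h \sum_j a_{i'j} u_j$ should force $m = \deg(g_{j_0})$ for some $j_0 \in \{1, \ldots, i+1\}$, identifying the missing index and giving the desired subsequence. The genuinely hard step is making this divisor-class analysis rigorous on a $W$ that is normal but possibly non-factorial along its codimension-$3$ singular set; a direct determinantal study of the $i \times (i+1)$ matrix $(a_{i'j})$ and its $i \times i$ minors is an alternative route, but equally delicate.
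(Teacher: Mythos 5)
Your parts (I) and (II) are sound. Part (I) is essentially the paper's own argument: the degree bookkeeping puts $\langle f_1,\dots,f_{i_0}\rangle$ inside $\langle g_1,\dots,g_{i_0-1}\rangle$, which violates the height of a complete intersection ideal. Part (II) takes a genuinely different route: the paper writes $f_l=\sum_{j\le i}h_{l,j}g_j$, observes that $V(g_1,\dots,g_i,\det H)\subset\sing(V(f_1,\dots,f_i))$ and uses \eqref{eq-codimthree} to force $\deg\det H=0$, whence $\sum_j(\deg f_j-\deg g_j)=0$. Your normality--irreducibility--dimension argument followed by graded Nakayama reaches the same conclusion more cheaply (one does need that $(f_1,\dots,f_i)$ is radical, which follows from generic smoothness plus Cohen--Macaulayness), but it does not set up the matrix $H$ that the paper reuses in part (III).

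The gap is in part (III), exactly where you flag it, and it is twofold. First, you never justify that $(g_1,\dots,g_{i+1})\mathcal{O}_W$ is \emph{globally} principal on $W:=V(f_1,\dots,f_i)$: local principality on the smooth locus plus an appeal to $\operatorname{Pic}$ does not produce a global equation, since the subscheme is a priori only a Weil divisor on $W$ and you have no control of the class group near $\sing(W)$. The paper's key input, absent from your sketch, is a determinantal dimension count: writing $f_l=\sum_{j\le i+1}h_{l,j}g_j$ and setting $Z:=\{g_1=\dots=g_{i+1}=0,\ \rank H\le i-1\}$ for the $i\times(i+1)$ matrix $H=[h_{l,j}]$, one has $Z\subset\sing(V(f_1,\dots,f_i))$, while by \cite[Prop.~17.25]{h} every nonempty component of $Z$ has codimension at most $2$ in $V(g_1,\dots,g_{i+1})$, i.e.\ dimension at least $k-i$; since \eqref{eq-codimthree} bounds $\dim\sing(V(f_1,\dots,f_i))$ by $k-i-1$, this forces $Z=\emptyset$. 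Hence $W$ is smooth along $V(g_1,\dots,g_{i+1})$, the subscheme is an honest Cartier divisor, and \cite[Cor.~IV.3.2]{hs-a} applies to all of $W$. (So your ``alternative'' determinantal route is in fact the intended argument, and it is the codimension hypothesis \eqref{eq-codimthree} that makes it work.) Second, your concluding step is not a proof: the Bezout relation $m\prod_l d_l=\prod_j e_j$ together with $e_j-m\ge 0$ does not force $m$ to equal some $e_{j_0}$, and the ``compatibility'' $f_{l}=h\sum_j a_{lj}u_j$ is vacuous on $W$, where both sides vanish. The correct finish is the one you already used in (II): once $(g_1,\dots,g_{i+1})=(f_1,\dots,f_i,h)$ as saturated height-$(i+1)$ ideals, both generating sets are minimal, so uniqueness of the degrees of a minimal homogeneous generating set gives $\{e_1,\dots,e_{i+1}\}=\{d_1,\dots,d_i,\deg h\}$ as multisets, which is the desired subsequence statement.
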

\begin{proof} (I) 
Let $i_0$ be the smallest  integer such that 
$\deg(f_{i_0}) < \deg(g_{i_0})$. 
Observe that $i_0 > 1$. Indeed, otherwise 
$\deg(f_1) < \deg(g_l)$ for $l = 1, \ldots, k+1$, which 
implies $f_1 \notin I(S)$ and contradicts the assumption that $S$ is contained in $X$.

Since $I(X) \subset I(S)$, we have $f_j \in \lrr g_1, \ldots, g_{k+1} \rrr $ for each $j$.
On the other hand, if $i \leq i_0$ then the inequality 
$$
\deg(f_i) \leq \deg(f_{i_0}) < \deg(g_{i_0})
$$
holds, which implies $f_j \in \lrr g_1, \ldots, g_{i_0-1}\rrr$. 
We obtain the inclusion
$$
V(g_1, \ldots,  g_{i_0-1}) \subset V(f_1, \ldots,  f_{i_0}). 
$$
The latter contradicts the  assumption that $X$ 
is a complete intersection threefold 
and completes the proof of part (I). 

\noindent
(II) Assume that $\deg(f_{i}) < \deg(g_{i+1})$.
We repeat the proof of part (I) to obtain
the inclusion
$$
\lrr f_1, \ldots,  f_{i} \rrr \subset \lrr g_1, \ldots,  g_{i} \rrr \, .
$$
Consequently, there exist 
homogeneous
polynomials $h_{l,j}$ such that 
\begin{equation} \label{eq-rozwiniecie-fj} 
f_l = \sum_{j=1}^{i} h_{l,j} g_j  \quad \mbox{ for } l =  1, \ldots, i.
\end{equation}
Obviously, one has $h_{l,j} = 0$ when $\deg(f_l) < \deg(g_j)$.  Otherwise, we have 
\begin{equation} \label{eq-degree-hlj}
\deg(h_{l,j}) = (\deg(f_l) - \deg(g_j)) \, .
\end{equation}
Fix a point $P \in V(g_1, \ldots,  g_{i})$.  Since the surface $S$ is assumed to be smooth, 
the polynomials $g_1, \ldots, g_{i}$ form part of a coordinate system  
around $P$ on $\PP_{k+3}$.
Observe, that  the Jacobi matrix of $f_1, \ldots, f_{i}$ with respect to such
coordinates in the point $P$ becomes
$$
\left[
\begin{array}{cccccc}
h_{1,1} & \ldots & h_{1,i} & 0 & \ldots & 0\\
\vdots &     & \vdots & \vdots &     & \vdots    \\
h_{i,1} & \ldots & h_{i,i} & 0 & \ldots & 0
\end{array}
\right] \, .
$$ 
In particular, 
if we put $H := [h_{l,j}]_{l=1,\ldots, i}^{j=1,\ldots, i}$, then
we have  
\begin{equation} \label{eq-inkluzjaII}
V(g_1, \ldots,  g_{i}, \det(H)) \subset \sing(V(f_1, \ldots, f_{i})).
\end{equation}
Suppose that $\deg(\det(H)) > 0$. Then, $\sing(V(f_1, \ldots, f_{i}))$ has a codimension-one component,
which is impossible by \eqref{eq-codimthree}. Therefore, \eqref{eq-degree-hlj} yields
$$
\sum_{j=1}^i (\deg(f_j) - \deg(g_j)) = \deg(\det(H)) = 0. 
$$
The claim results from part (I) of the lemma.

\noindent
(III) Assume that $\deg(f_{i}) < \deg(g_{i+2})$.
As in the proof of part (I) we obtain
the inclusion
$$
\lrr f_1, \ldots,  f_{i} \rrr \subset \lrr g_1, \ldots,  g_{i+1} \rrr
$$
and the  
homogeneous
polynomials $h_{l,j}$ such that 
\begin{equation} \label{eq-rozwiniecie-fjII} 
f_l = \sum_{j=1}^{i+1} h_{l,j} g_j  \quad \mbox{ for } l =  1, \ldots, i.
\end{equation}
Moreover, either \eqref{eq-degree-hlj} holds or   $h_{l,j} = 0$ when $\deg(f_l) < \deg(g_j)$. 

Put $Z := \{ x \in \PP_{k+3} \, : \, g_1(x) =  \ldots =   g_{i+1}(x) = 0  \mbox{ and }    \rank(H(x)) \leq (i-1)  \}$, where
$H := [h_{l,j}]_{l=1,\ldots, i}^{j=1,\ldots, i+1}$. As in \eqref{eq-inkluzjaII} we have the inclusion 
\begin{equation*} \label{eq-inkluzjaIII}
Z  \subset \sing(V(f_1, \ldots, f_{i})) \, .
\end{equation*}
By \cite[Prop.~17.25]{h} either $Z = \emptyset$ or $\dim(Z) = (k-i)$. Thus the assumption \eqref{eq-codimthree}
yields  $Z = \emptyset$. 
In particular, the set  $V(f_1, \ldots,  f_{i})$ is smooth along $V(g_1, \ldots,  g_{i+1})$, so the latter is a Cartier divisor on
$V(f_1, \ldots,  f_{i})$. By    \cite[Cor.~IV.3.2]{hs-a} $V(g_1, \ldots,  g_{i+1})$ is cut out by a hyperplane from 
$V(f_1, \ldots,  f_{i})$, which completes the proof of (III). 
\end{proof}

After those preparations we can give the proof  of  Thm~\ref{thm-inequality}:

\noindent
\begin{proof}[Proof of Thm~\ref{thm-inequality}]
  We maintain the notation \eqref{eq-ordereddegreesI}, \eqref{eq-ordereddegreesII}.
Observe that without loss of generality we can assume that  $X$ is non-degenerate.

Since $S \subset X$ is not Cartier, the threefold $X$ has a node on $S$. By Prop.~\ref{lem-eq-sing-on-the-surface} we have
$$
\mmS(f_{1}, \ldots, f_{k}; g_{1}, \ldots, g_{k+1}) > 0 \, .
$$
Lemma~\ref{lem-ineq-sing} yields that the non--decreasing sequences 
$(f_{1}, \ldots, f_{k})$, $(g_{1}, \ldots, g_{k+1})$ satisfy the conditions
\eqref{eq-cond-one}, 
\eqref{eq-cond-two},
\eqref{eq-cond-three}. Finally,  the theorem results directly from Lemma~\ref{lem-sde-properties}.  
\end{proof}

Motivated by Thm.~\ref{thm-inequality} we propose the following conjecture:

\begin{conj} \label{conj}
Let $X:= V(f_{1}, \ldots, f_{k}) \subset \PP_{k+3}$ be a nodal
complete intersection threefold such that 
the sequence $(\deg(f_{1}),\dots,\deg(f_{k}))$ is non--decreasing and 
$V(f_{1}, \ldots, f_{i})$ is smooth in codimension three for $i=1,\ldots,k-1$.
If $X$ is not factorial, then either 
\[
\# \operatorname{sing}(X) \geq \sum_{1\leq i \leq j \leq k} (\deg(f_i)-1) \cdot (\deg(f_j)-1)
\]

or \[\#\sing(X)\ge 2^{k-1} \text{ and }\quad k \in \{2, 3, 4\},\
   \deg(f_1) = \ldots = \deg(f_k) = 2\ . \]

\end{conj}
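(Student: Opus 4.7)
The plan is to assemble the theorem from the three preceding results in a few lines. First I would adopt the notation of \eqref{eq-ordereddegreesI}--\eqref{eq-ordereddegreesII}, write $\ud := (\deg(f_1),\ldots,\deg(f_k))$ and $\ue := (\deg(g_1),\ldots,\deg(g_{k+1}))$, and reduce to the case where $X$ is non-degenerate in $\PP_{k+3}(\CC)$, so that $d_1 \geq 2$ and the hypotheses of Lemma~\ref{lem-sde-properties} are met.

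Next I would translate the hypothesis "$S$ is not Cartier on $X$" into the statement $\sing(X)\cap S\neq\emptyset$. By Remark~\ref{rem-kufactorial} the local class group at every node of $X$ is $\ZZ$, while at any smooth point of $X$ a Weil divisor is automatically Cartier; hence the non-Cartier locus of $S$ is contained in $\sing(X)\cap S$, and this intersection must be non-empty. Proposition~\ref{lem-eq-sing-on-the-surface} then gives
\[
\mmS(\ud;\ue) \,=\, \#(\sing(X) \cap S) \,>\, 0 \, .
\]

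With this positivity in hand, I would invoke Lemma~\ref{lem-ineq-sing} to verify that $\ud$ and $\ue$ satisfy the arithmetic conditions \eqref{eq-cond-one}, \eqref{eq-cond-two} and \eqref{eq-cond-three}; this is exactly the content of parts (I), (II), (III) of that lemma and is where the smoothness-in-codimension-three hypothesis \eqref{eq-codimthree} is used. Now Lemma~\ref{lem-sde-properties} offers three alternatives: the first, $\mmS(\ud;\ue)=0$, is ruled out by the previous step, while the remaining two match the two branches of the conclusion of the theorem, after noting that
\[
\mmS(\ud; 1^{k+1}) \,=\, \sum_{1 \leq i \leq j \leq k} (\deg(f_i) - 1)(\deg(f_j) - 1)
\]
by the definition \eqref{eq-def-sde}, and that $\#\sing(X) \geq \#(\sing(X) \cap S) = \mmS(\ud;\ue)$.

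The only step that is not pure assembly is the implication "$S$ not Cartier $\Rightarrow$ $\sing(X)\cap S\neq\emptyset$", which rests on the Milnor computation of the local class group of a node; I expect this to be the subtlest point of the write-up, though it is essentially one sentence. All of the genuine combinatorial work is packaged into Lemma~\ref{lem-sde-properties}, and the algebraic geometry of the codimension-three condition has already been absorbed into Lemma~\ref{lem-ineq-sing}; the theorem therefore reduces to a three-line application of these results.
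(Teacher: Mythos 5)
You have proved the wrong statement. The result you are asked about is the \emph{Conjecture} of the paper, whose only hypothesis on the divisor-theoretic side is that $X$ is not factorial; the paper itself does not prove it and explicitly leaves it open (it is proved only in the hypersurface case $k=1$, by Cheltsov). What you have written is, essentially verbatim, the paper's proof of Theorem~\ref{thm-inequality}, which carries the strictly stronger hypothesis that \emph{a smooth complete intersection surface is not a Cartier divisor on $X$}. Your argument begins by adopting the notation \eqref{eq-ordereddegreesII} for the ideal of ``the'' surface $S$ and then translating ``$S$ is not Cartier'' into $\sing(X)\cap S\neq\emptyset$ --- but non-factoriality of $X$ only gives you \emph{some} Weil divisor that fails to be Cartier, and there is no reason for that divisor to be (or to be replaceable by) a smooth surface cut out by a complete intersection of the shape \eqref{eq-ordereddegreesII}. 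The entire difficulty of the conjecture, and the reason it is stated as a conjecture rather than a theorem, lies in this missing reduction; everything after that point in your write-up is correct but only establishes the special case.

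Concretely: once a smooth complete intersection surface $S\subset X$ failing to be Cartier is \emph{given}, your assembly is sound and matches the paper --- non-degeneracy gives $d_1\geq 2$, Prop.~\ref{lem-eq-sing-on-the-surface} gives $\mmS(\ud;\ue)=\#(\sing(X)\cap S)>0$, Lemma~\ref{lem-ineq-sing} verifies \eqref{eq-cond-one}--\eqref{eq-cond-three}, and Lemma~\ref{lem-sde-properties} yields the two alternatives, with $\mmS(\ud;1^{k+1})=\sum_{1\leq i\leq j\leq k}(\deg(f_i)-1)(\deg(f_j)-1)$. But to prove the Conjecture you would need, in addition, an argument that any non-factorial $X$ satisfying the codimension-three hypothesis either contains such a surface or satisfies the bound for some other reason. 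No such argument appears in your proposal or in the paper.
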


We shall see in the next section that if the above conjecture holds
true then the bound it provides is sharp (see Thm.~\ref{thm-existence}).
For the hypersurface case the conjecture was proved by
I.~Cheltsov (\cite{cheltsov3}). For a complete intersection in
$\PP_{5}$ Cheltsov  formulated similar conjecture (see
\cite[Conj.~30]{cheltsov}). He puts a stronger assumption on $X$,
namely he assumes that the hypersurface $V(f_{1})$ is smooth, whereas
Thm.~\ref{thm-inequality} provides a strong evidence that the bound
holds also if $V(f_{1})$ is allowed to have isolated
singularities. The necessity of an assumption on the singularities of
$V(f_{1})$ has been observed by Cheltsov in the following example.
\begin{examp}[\hbox{\cite[Ex.~29]{cheltsov}}]
  Let $Y_{1}=V(l_{1}l_{3}+l_{2}l_{4})$ be a rank-$4$ quadric in $\PP_{5}$,
  and let $Y_{2}=V(f_{2})$ be a general degree-$n$ hypersurface. Then
  the complete intersection $X:=Y_{1}\cap Y_{2}$ has 
exactly  
  $n$ nodes in $V(l_{1},l_{2},l_{3},l_{4},f_{2})$ 
as its only singularities. The threefold $X$ is not
factorial (as it contains the smooth surface
  $V(l_{1},l_{3},f_{2})$), while $$\#\operatorname{sing}(X)=n<
  n^{2}-n+1 \, , $$ which violates the bound of Thm.~\ref{thm-inequality}.
    
Observe that $Y_{1}$ is singular along the line 
$L := V(l_{1}, \ldots, l_{4})$, and  each singularity of $X$ is
obtained as a 
point where $L$ and $Y_{2}$
meet. The above complete intersection corresponds to
the sequences
$\ud = (2,n)$, $\ue = (1,1,n)$ that do not satisfy the condition \eqref{eq-cond-three}. 
\end{examp}

For a fixed integer $k>2$ and $n \gg k$ the sequences $\ud:=(2^{k-1},n)$ and
$\ue:=(1^{k},n)$ produce (via Prop.~\ref{thm-existence}) similar
examples in $\PP_{k+3}$.

\begin{rem} \label{cheltsovniemaracji}
Observe that Thm.~\ref{thm-inequality} implies that
the above Cheltsov's example with $n=2$ and $f_1$, $f_2$ interchanged (i.e. $V(f_2)$ considered as the first hypersurface)
is the only counterexample to \cite[Conjecture 30]{cheltsov} in which a complete intersection surface fails to be a Cartier divisor.
The example in question is one of the three exceptional cases in Thm.~\ref{thm-inequality} 
(i.e. $k=2$, $e_1 = e_2 = 1$ and $e_3 = d_1 = d_2 = 2$).

\end{rem}

\section{General complete intersections} \label{sect-examples}
 
In this section we prove the existence of nodal complete intersection
threefolds that appear in Thm.~\ref{thm-inequality}. Namely,
we show the following theorem.

\begin{theo} \label{thm-jest}
Let $\ud$, $\ue$ be a pair of  non--decreasing sequences of
positive integers satisfying the conditions
\eqref{eq-cond-one}---\eqref{eq-cond-three}.
Then there exists a smooth
surface $S=V(g_{1},\dots,g_{k+1})$ and a threefold
$X=V(f_{1},\dots,f_{k})$ with $\deg (f_{i})=d_{i}$,
$\deg(g_{j})=e_{j}$ such that  $S \subset X$ and
the threefold $X$ has exactly  $\mmS(\ud;\ue)$ nodes on $S$ as its
only singularities. \\
In particular, if $\mmS(\ud, \ue) > 0 $, then the surface $S$ is not a
Cartier divisor, hence the variety  $X$ is not factorial. 
\end{theo}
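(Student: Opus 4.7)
The plan is to construct $S$ and $X$ via a direct Bertini-type argument and then invoke Proposition~\ref{lem-eq-sing-on-the-surface} for the node count. The non-factoriality claim is automatic: a smooth surface through an ordinary threefold node represents a non-trivial element of the local class group $\ZZ$ and hence cannot be Cartier at that node.

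First, I would pick general homogeneous $g_{1},\dots,g_{k+1}$ with $\deg g_{j}=e_{j}$; by Bertini's theorem the complete intersection $S:=V(g_{1},\dots,g_{k+1})\subset \PP_{k+3}$ is a smooth surface. Next, every element of $I(S)_{d_{i}}$ can be written as $f_{i}=\sum_{j=1}^{k+1}h_{ij}g_{j}$, with $h_{ij}$ homogeneous of degree $d_{i}-e_{j}$ (taken to be zero when negative). Condition \eqref{eq-cond-one} gives $d_{i}\ge e_{i}\ge e_{1}$, so nontrivial $f_{i}$'s exist, and I would choose the coefficient polynomials $h_{ij}$ sufficiently generic.

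Three claims then need to be checked. (a) $X=V(f_{1},\dots,f_{k})$ is a complete intersection threefold smooth on $\PP_{k+3}\setminus S$: this is a Bertini-type argument carried out inductively in the degrees, and conditions \eqref{eq-cond-two}--\eqref{eq-cond-three} are precisely the numerical constraints that provide the parameter freedom needed to kill all potential singularities off $S$. (b) On $S$ one can pick local analytic coordinates $u_{1},u_{2},t_{1},\dots,t_{k+1}$ around any point $P$ so that $t_{j}=g_{j}$ locally (using smoothness of $S$); the Jacobian of $(f_{1},\dots,f_{k})$ at $P$ restricted to the normal directions of $S$ then equals $[h_{ij}(P)]$, so $X$ is singular at $P\in S$ iff $\rank[h_{ij}(P)]<k$, and for generic $h_{ij}$ the Thom--Porteous formula makes this a finite reduced subset of $S$. (c) At each such point the matrix drops rank by exactly one; after a linear change of the $f_{i}$'s one may assume that $f_{1},\dots,f_{k-1}$ cut out a smooth fourfold locally, and for generic $h_{kj}$ the quadratic part of $f_{k}$ restricted to that fourfold is nondegenerate, exhibiting $P$ as an ordinary node.

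Once (a)--(c) are in place, Proposition~\ref{lem-eq-sing-on-the-surface} applied to the resulting nodal pair $(X,S)$ immediately gives exactly $\mmS(\ud;\ue)$ nodes on $S$, and by construction these are all the singularities of $X$. The main obstacle I expect is claim (a): for small degree gaps $d_{i}-e_{i}$ the linear system $I(S)_{d_{i}}$ can have a base locus strictly containing $S$, so one cannot apply Bertini naively. The role of the conditions \eqref{eq-cond-two}--\eqref{eq-cond-three} is precisely to supply enough independent degrees of freedom in the $h_{ij}$ for an incremental Bertini argument to push all singularities of $X$ onto $S$.
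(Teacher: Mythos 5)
Your skeleton is the same as the paper's: generic $g_j$ giving a smooth complete intersection $S$, generic $f_i\in I(S)_{d_i}$, identification of $\sing(X)\cap S$ with the degeneracy locus of $[h_{ij}]$ via the local coordinate computation, Proposition~\ref{lem-eq-sing-on-the-surface} for the count, and the local class group $\ZZ$ of a node for the non-Cartier statement. The genuine gap is your claim (a), which you correctly flag as the main obstacle but then describe with a strategy that cannot work as stated. The intermediate complete intersections $V(f_1,\dots,f_i)$ cannot in general be made smooth away from $S$, for any choice of the $h_{ij}$: take $k=3$, $\ue=(1,1,1,3)$, $\ud=(2,3,3)$, which satisfies \eqref{eq-cond-one}--\eqref{eq-cond-three}. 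Then $I(S)_{2}=\lrr g_1,g_2,g_3\rrr_2$, so every $f_1$ is a quadric of rank at most $6$ in $\PP_6$ and is singular at a point of $V(g_1,g_2,g_3)$ lying off $S$. So there is no ``parameter freedom to kill all singularities off $S$,'' and an induction whose hypothesis is ``smooth off $S$ (or off the base locus)'' breaks at the first step. What the paper proves instead (Lemmas~\ref{lem-sing-easycase}--\ref{lem-sing-gencase}) is that for $i\le k-1$ the singular locus of $V(f_1,\dots,f_i)$ has codimension at least five; this makes the fourfold $V(f_1,\dots,f_{k-1})$ smooth and lets a dimension count show that the remaining hypersurfaces miss the singularities of the earlier intermediates. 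Establishing that codimension bound is the real work: it is a case analysis according to whether $f_i$ requires $i$, $i+1$ or $i+2$ of the generators (the function $\mathfraki$), with conditions \eqref{eq-cond-two}--\eqref{eq-cond-three} used to force specific entries of $H_i$ to be nonzero constants so that explicit minors control the rank loci.

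Your steps (b) and (c) have a smaller but real gap of the same flavour: $[h_{ij}]$ is not a generic matrix of forms (entries are forced to vanish or to be constants by the degrees $d_i-e_j$), so Thom--Porteous does not by itself give that the degeneracy locus on $S$ has the expected codimension and is reduced, nor that each point is an ordinary node. The paper's substitute (Lemma~\ref{lem-A1}) covers $S$ by open sets on which $\mathcal{I}(S)$ is generated by two of the $g_j$, uses openness of the condition ``at most $A_1$ singularities,'' and exhibits an explicit member $h_{k,i}g_i+h_{k,j}g_j$ with only nodes. Your local Jacobian computation and the final appeal to Proposition~\ref{lem-eq-sing-on-the-surface} are correct as stated.
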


In fact we shall show a more explicit statement
(Prop.~\ref{thm-existence}). 
In
order to avoid unnecessary technical complications in its proof, we
represent the polynomial $f_{i}$ as a linear combination of minimal
number of the generators $g_{j}$. To make this explicit, 
for  $i \leq (k-1)$ we define 
$$
\mathfraki_{(\ud;\ue)}(i) :=  \left\{ \begin{array}{cl}
i    & \mbox{ if }  d_i < e_{i+1} \, ,  \\
i+1  & \mbox{ if }    d_i \geq  e_{i+1} \mbox{ and } d_i < e_{i+2} \, ,  \\
i+2  & \mbox{ otherwise. }  
\end{array}
\right.
$$
Whenever it leads to no ambiguity we write $\mathfraki(i) :=
\mathfraki_{(\ud;\ue)}(i)$. Finally,  we put  $\mathfraki(k) := k+1 $.

We choose non-zero homogeneous polynomials  $g_1$, $\ldots$, $g_{k+1}$ such that  $\deg(g_j) = e_j$ and
\begin{equation} \label{eq-int-smooth}
\mbox{V}(g_1, \ldots, g_i) \mbox{ is smooth  for } i \leq k + 1
\end{equation}
Given the polynomials $g_{1}, \ldots, g_{k+1}$ and 
homogeneous polynomials $h_{i,j}$ of degree $(d_i - e_j)$, we define  
 a degree-$d_i$ homogeneous polynomial $f_i \in \lrr g_1, \ldots,
g_{\mathfraki(i)} \rrr$ by the equality  
\begin{equation} \label{eq-sumfi} 
f_i = \sum_{j=1}^{\mathfraki(i)} h_{i,j} g_j  \, .  
\end{equation}

In order to prove  Prop.~\ref{thm-existence} we need several lemmata.
In their proofs we put $h_{i,j} := 0$ for $j > \mathfraki(i)$ and  define the matrix
$$
H_i := [h_{l,j}]_{l=1,...,i}^{j=1,...,\mathfraki(i)} \, .
$$

At first we deal with the smooth case.
\begin{lemm} \label{lem-sing-easycase}
Fix  $g_1$, $\ldots$ $g_{k+1}$ that satisfy \eqref{eq-int-smooth}. If $i \leq k-1$ and 
$\mathfraki(i) \leq  i+1$, then for generic  polynomials  $h_{l,j}$, 
where $l \leq i$ and $j \leq \mathfraki(l)$, the intersection 
$\mbox{V}(f_1, \ldots, f_i) \mbox{ is smooth along } \mbox{V}(g_1, \ldots, g_{\mathfraki(i)})$.
\end{lemm}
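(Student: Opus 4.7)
The plan is to reduce smoothness of $V(f_1, \ldots, f_i)$ along $T:=V(g_1, \ldots, g_{\mathfraki(i)})$ to a rank condition on the $i\times\mathfraki(i)$ matrix $H_i$, and then to exploit the hypothesis $\mathfraki(i) \leq i+1$ --- which via \eqref{eq-cond-three} forces $\ud$ to embed as a subsequence of $(e_1, \ldots, e_{i+1})$ --- in order to exhibit an $i\times i$ submatrix of $H_i$ whose determinant is a non-zero constant for generic $h$'s and hence has full rank at every point of $T$.

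First I would verify that $T \subseteq V(f_1, \ldots, f_i)$. A direct inspection of the definition of $\mathfraki$ together with the monotonicity of $\ud$ and $\ue$ shows that $\mathfraki$ is non-decreasing in its argument, so $\mathfraki(l) \leq \mathfraki(i)$ for every $l \leq i$; consequently each $f_l$ lies in $\lrr g_1, \ldots, g_{\mathfraki(i)} \rrr$ and vanishes on $T$.

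Next, at any $P \in T$ I would use \eqref{eq-int-smooth} to extend $g_1, \ldots, g_{\mathfraki(i)}$ to a local coordinate system on $\PP_{k+3}$ around $P$. Differentiating $f_l = \sum_j h_{l,j} g_j$ in these coordinates annihilates both the terms arising from the derivatives of the $h_{l,j}$ (each multiplied by some $g_j(P)=0$) and the derivatives in directions transverse to $T$. What remains is precisely the matrix $H_i(P)=[h_{l,j}(P)]$ placed in the first $\mathfraki(i)$ columns of the Jacobian, so smoothness of $V(f_1,\ldots,f_i)$ along $T$ is equivalent to $H_i(P)$ having rank $i$ at every $P\in T$.

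The heart of the argument is exhibiting a generically non-vanishing $i\times i$ minor of $H_i$. By \eqref{eq-cond-three} the hypothesis produces indices $1\leq j_1<\cdots<j_i\leq i+1$ with $d_l=e_{j_l}$; a short check shows $j_l\in\{l,l+1\}$, and if $j_l=l+1$ then $d_l=e_{l+1}$ forces $\mathfraki(l)\geq l+1$, so the diagonal entry $h_{l,j_l}$ of the submatrix on columns $j_1,\ldots,j_i$ is not killed by the rule $h_{l,j}=0$ for $j>\mathfraki(l)$ and is a free scalar parameter of degree $0$. Since $\sum_l d_l = \sum_l e_{j_l}$, the selected submatrix is homogeneous of total degree $0$, so its determinant is a constant in the ambient variables; and since the identity permutation contributes the monomial $\prod_l h_{l,j_l}$ in the $h$-coefficients while no other permutation produces this particular monomial, the determinant is a non-zero polynomial in the $h$'s and hence a non-zero scalar for generic choices. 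The step I expect to be most delicate is precisely this bookkeeping with $\mathfraki$ and \eqref{eq-cond-two}--\eqref{eq-cond-three}: verifying that $j_l\leq \mathfraki(i)$, that each diagonal entry of the chosen submatrix is a genuine free parameter rather than a forced zero, and that the remaining entries are either degree-$0$ scalars or vanish for degree reasons, since this is the only place where the easy-case assumption $\mathfraki(i)\leq i+1$ is used non-trivially.
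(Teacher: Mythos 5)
Your proof is correct and follows essentially the same route as the paper: reduce smoothness along $V(g_1,\ldots,g_{\mathfraki(i)})$ to the rank of $H_i$ via the Jacobian in coordinates adapted to the $g_j$, then use the subsequence embedding $(d_1,\ldots,d_i)\subset(e_1,\ldots,e_{i+1})$ coming from \eqref{eq-cond-two}--\eqref{eq-cond-three} to exhibit an $i\times i$ minor that is a nonzero constant for generic $h_{l,j}$ (the paper phrases this as deleting the one ``extra'' column of $H_i$ and splits into the cases $\mathfraki(i)=i$ and $\mathfraki(i)=i+1$, which you treat uniformly). The only imprecision is your remark that the remaining entries of the selected submatrix are degree-$0$ scalars or vanish --- sub-diagonal entries can be positive-degree polynomials --- but this is harmless since your actual argument rests on the total-degree-$0$ homogeneity of the determinant and the uncancelled identity-permutation monomial.
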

\begin{proof}
{\it Case  $\mathfraki(i) = i$:}
As in the proof of Lemma~\ref{lem-ineq-sing}.(II) we have the equality
$$
V(g_1, \ldots,  g_{i}, \det(H_i)) =  \sing(V(f_1, \ldots, f_{i})) \cap V(g_1, \ldots,  g_{i})
$$
and $\det(H_i)$ is a degree-$0$ polynomial. Obviously, for generic  $h_{l,j}$, we have  $\det(H_i) \neq 0$.

\noindent
{\it Case  $\mathfraki(i) = i+1$:} 
As in  the proof of Lemma~\ref{lem-ineq-sing} we have the equality
\begin{equation} \label{eq-drobiazg}
\sing(V(f_1, \ldots, f_{i})) \cap V(g_1, \ldots,  g_{i+1}) = \{ P \in V(g_1, \ldots,  g_{i+1}) \, : \, \rank(H_i(P)) \leq (i-1) \} \, .
\end{equation}
Moreover, by \eqref{eq-cond-three}, there exists an integer $i_0 \geq 1$
such that  $$(e_1, \ldots, e_{k+1}) = (d_1, \ldots, d_{i_0}, e_{i_0+1},  d_{i_0+1}, \ldots, d_{k}),$$ 
so 
the polynomials $h_{1,1}, \ldots, h_{i_0,i_0}$, $h_{i_0+1,i_0+2}, \ldots, h_{i,i+1}$ are constant and 
determinant of the matrix obtained from $H_i$ by deleting its $i_0{}^{th}$ column is a degree-$0$ polynomial.
For generic  $h_{l,j}$ the determinant in question 
does not vanish and the lemma follows. 
\end{proof}

\begin{lemm} \label{lem-sing-caseone}
 Assume that $i \leq (k-1)$ and 
$\mathfraki(i) \leq  (i+1)$.
For generic choice of  the polynomials $g_1$, $\ldots$ $g_{\mathfraki(i)}$ and  $h_{l,j}$, 
where $l \leq i$ and $j \leq \mathfraki(l)$ the intersection $\mbox{V}(f_1, \ldots, f_i)$ is smooth. 

\end{lemm}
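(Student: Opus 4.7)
The strategy is to combine Lemma~\ref{lem-sing-easycase}, which yields smoothness of $V(f_{1},\ldots,f_{i})$ along the closed subset $B:=V(g_{1},\ldots,g_{\mathfraki(i)})$, with an iterated Bertini argument that delivers smoothness on the complementary open set $U:=\PP_{k+3}\setminus B$.

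First, I would fix generic $g_{1},\ldots,g_{\mathfraki(i)}$ realising~\eqref{eq-int-smooth} and denote by $\mathcal{L}_{l}$ the linear subsystem of $|\mathcal{O}(d_{l})|$ consisting of polynomials of the form $\sum_{j=1}^{\mathfraki(l)}h_{l,j}g_{j}$; its base locus in $\PP_{k+3}$ equals $V(g_{1},\ldots,g_{\mathfraki(l)})$. A short case check on the three possible values of $\mathfraki(l)$, using that $\underline{d}$ and $\underline{e}$ are non-decreasing, shows that the function $l\mapsto\mathfraki(l)$ is itself non-decreasing (for instance $\mathfraki(l-1)=l+1$ together with $\mathfraki(l)=l$ would force $d_{l-1}\ge e_{l+1}>d_{l}\ge d_{l-1}$, a contradiction); in particular $V(g_{1},\ldots,g_{\mathfraki(l)})\supseteq B$ for every $l\le i$.

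Next, I would proceed by induction on $l=0,1,\ldots,i$ with the claim that, for a generic choice of the $h_{m,j}$ with $m\le l$, the variety $Y_{l}:=V(f_{1},\ldots,f_{l})$ is smooth on $U$. The base case $l=0$ being trivial, for the inductive step Bertini's theorem applied to $\mathcal{L}_{l}$ on the (by induction) smooth open piece $Y_{l-1}\cap U$ produces smoothness of $Y_{l}$ on $(Y_{l-1}\cap U)\setminus V(g_{1},\ldots,g_{\mathfraki(l)})$.

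The hard part will be handling the residual locus $Y_{l}\cap V(g_{1},\ldots,g_{\mathfraki(l)})\cap U$, which is non-empty exactly when $\mathfraki(l)<\mathfraki(i)$. At such a point $P$, condition~\eqref{eq-int-smooth} makes $dg_{1}(P),\ldots,dg_{\mathfraki(l)}(P)$ linearly independent, while $df_{m}(P)=\sum_{j=1}^{\mathfraki(m)}h_{m,j}(P)\,dg_{j}(P)$ for every $m\le l$; smoothness of $Y_{l}$ at $P$ therefore reduces to the $l\times\mathfraki(l)$ matrix $[h_{m,j}(P)]$ having maximal rank~$l$. I would close this step by the same Jacobian-rank calculation as in the proof of Lemma~\ref{lem-sing-easycase}, combined with a standard dimension count on the incidence variety parametrising pairs $(P,h)$ where the rank drops, exploiting that some $g_{j_{0}}(P)\neq 0$ with $\mathfraki(l)<j_{0}\le\mathfraki(i)$ allows the values $h_{m,j}(P)$ to be prescribed independently. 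Setting $l=i$ and combining with Lemma~\ref{lem-sing-easycase} on $B$ then yields smoothness of $V(f_{1},\ldots,f_{i})$ on all of $\PP_{k+3}$.
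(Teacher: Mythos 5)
Your overall architecture is sound and genuinely different from the paper's: you induct on the number $l$ of defining polynomials inside a fixed setup, using Bertini off the base locus of $\mathcal{L}_l$ and Lemma~\ref{lem-sing-easycase} on $B$, whereas the paper inducts on the statement of the lemma itself and must therefore insert a special workaround (replacing $(\ud;\ue)$ by auxiliary sequences $(\tilde\ud;\tilde\ue)$) in the case $\mathfraki(i-1)=i+1$, where the inductive hypothesis does not apply to $(f_1,\dots,f_{i-1})$. Your observation that $l\mapsto\mathfraki(l)$ is non-decreasing is correct and lets you bypass that complication entirely. The problem is the closing step on the residual locus $R_l:=V(g_1,\dots,g_{\mathfraki(l)})\setminus B$. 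The incidence-variety dimension count does not work there: for fixed $P\in R_l$ the locus of parameters with $\rank H_l(P)\le l-1$ has codimension at most $\mathfraki(l)-l+1\le 3$ (and exactly $1$ when $\mathfraki(l)=l$), while $\dim R_l=k+3-\mathfraki(l)\ge k-i+2\ge 3$; hence the projection of the incidence variety to the parameter space can be dominant, and the count alone cannot exclude rank drop at some point of $R_l$ for every choice of the $h_{m,j}$. The parenthetical about $g_{j_0}(P)\neq 0$ does not help here: the values $h_{m,j}(P)$ are freely prescribable whenever $\deg h_{m,j}\ge 0$ irrespective of the $g$'s, and the real constraint is that many entries of $H_l$ are forced to vanish or are global constants.

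What actually rescues the residual locus is not a dimension count but the degree computation hidden in the proof of Lemma~\ref{lem-sing-easycase}: conditions \eqref{eq-cond-two}--\eqref{eq-cond-three} force a suitable $l\times l$ minor of $H_l$ to be a homogeneous polynomial of degree zero (every monomial in its expansion has degree $\sum_m d_m-\sum_{j}e_j=0$ over the selected columns), hence a nonzero constant for generic coefficients, so $\rank H_l=l$ identically and $R_l$ contributes no singular points. You would need to carry out this minor computation at each intermediate level $l$, not only at $l=i$; note that the hypothesis $\mathfraki(i)\le i+1$ does not prevent $\mathfraki(l)=l+2$ for $l<i$ (e.g.\ $\ud=(2,2,3,4)$, $\ue=(1,2,2,3,4)$, $i=3$, $l=1$), a case outside the scope of Lemma~\ref{lem-sing-easycase} as stated, so the constant-minor argument must be rechecked there (it does go through, since \eqref{eq-cond-three} applied at $i$ still gives $(d_1,\dots,d_l)\subset(e_1,\dots,e_{l+1})$). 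As written, with the dimension count carrying the weight of this step, the proof has a genuine gap.
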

\begin{proof} We proceed by induction on $i$.

\noindent
[i=1]:  If $\mathfraki(1) = 1$, then $f_1 = h_{1,1} \cdot g_1$ with $\deg(h_{1,1}) = 0$, so $V(g_1)= V(f_1)$ is smooth by assumption.

\noindent
Suppose that $\mathfraki(1) = 2$, then $f_1 = h_{1,1} \cdot g_1 + h_{1,2} \cdot g_2$ with $\deg(h_{1,2}) = 0$. 
Fix $g_1$, $g_2$ satisfying \eqref{eq-int-smooth}. 
By Bertini, for generic $h_{1,1}, h_{1,2}$ the hypersurface $V(f_1)$ is smooth away from $V(g_1, g_2)$. 
Therefore, generic $V(f_1)$ is smooth by Lemma~\ref{lem-sing-easycase}.

\noindent
[(i-1) $\leadsto$ i]: 
If $\mathfraki(i-1) \leq i$, then by the induction hypothesis,
for generic $g_1, \ldots, g_{i}$ and $h_{1,1}, \ldots, h_{i-1,i}$
the intersection $\mbox{V}(f_1, \ldots, f_{i-1})$ is smooth.
By Bertini, for generic $h_{i,1}, \ldots, h_{i,i}$ the intersection  $V(f_1, \ldots, f_i)$ is smooth away from $V(g_1, \ldots, g_{i})$. 
Lemma~\ref{lem-sing-easycase} completes the proof in this case.

\noindent
Suppose that  $\mathfraki(i-1) = i+1$. By definition, we have $d_{i-1} \geq e_{i+1}$.
From $\mathfraki(i) = i+1$ and \eqref{eq-cond-three} we infer that $d_i \in \{e_i, e_{i+1}\}$,
so $d_i \leq e_{i+1}$ and $d_{i-1} = d_i = e_{i+1}$. Finally, from \eqref{eq-cond-three} we obtain
$d_{i-1} \in \{e_{i-1}, e_{i}\}$, so $d_{i-1} \leq  e_i  \leq  e_{i+1}$ and  $d_{i-1} = e_{i}$.
In particular, we have the inclusion 
$$
(d_1, \ldots, d_{i-1}) \subset (e_1, \ldots, e_{i}).
$$
Consequently, if we define $\tilde{\ud} := (d_1, \ldots, d_{i-1}, d_{i-1}+1, \ldots,  d_{i-1} + k - i + 1)$
and $\tilde{\ue} := (e_1, \ldots, e_{i}, d_{i-1}+1, \ldots,  d_{i-1} + k - i + 1)$,
then the pair  $(\tilde{\ud}, \tilde{\ue})$ satisfies the conditions   \eqref{eq-cond-one}, 
\eqref{eq-cond-two},
\eqref{eq-cond-three}. By definition,  for $l < i-1$
\begin{equation*} \label{eq-zmiana}
\mathfraki_{(\tilde{\ud};\tilde{\ue})}(l)=\mathfraki_{(\ud;\ue)}(l) \mbox{ and }  \mathfraki_{(\tilde{\ud};\tilde{\ue})}(i-1) = i  
\end{equation*}
so, by the induction hypothesis,  $\mbox{V}(\tilde{f}_1, \ldots, \tilde{f}_{i-1})$ is smooth if we make generic choices
of $\tilde{h}_{l,j}$, 
where $l \leq i-1$ and $j \leq \mathfraki_{(\tilde{\ud};\tilde{\ue})}(l)$. 
Assume the latter holds, and put 
$h_{l,j} = \tilde{h}_{l,j} \mbox{ for } l \leq i-1 \mbox{ and } j \leq \mathfraki_{(\tilde{\ud};\tilde{\ue})}(l)$.
Then for generic choice of $h_{i-1,i+1} \in \CC$ the intersection $\mbox{V}(f_1, \ldots, f_{i-1})$ is smooth.
Finally, Bertini and Lemma~\ref{lem-sing-easycase} imply that $\mbox{V}(f_1, \ldots, f_{i})$ is smooth for generic choices
of $h_{i,1}, \ldots, h_{i,i+1}$. 
\end{proof}

Now we can deal with the general case.
\begin{lemm} \label{lem-sing-gencase}
Let $i \leq (k-1)$.
For generic choice of  the polynomials $g_1$, $\ldots$ $g_{\mathfraki(i)}$ and  $h_{l,j}$, 
where $l \leq i$ and $j \leq \mathfraki(l)$, the intersection
$\mbox{V}(f_1, \ldots, f_i)$ is smooth in codimension four.
\end{lemm}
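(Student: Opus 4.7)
The plan is to induct on $i$, following the template of Lemma~\ref{lem-sing-caseone}. The only case not already settled by that lemma (which actually yields smoothness) is $\mathfraki(i)=i+2$, equivalently $d_i\ge e_{i+2}$. Fix this case and split the singular locus of $V(f_1,\ldots,f_i)$ according to whether it meets the base locus $V(g_1,\ldots,g_{i+2})$.

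On $V(g_1,\ldots,g_{i+2})$, I would mirror the argument of Lemma~\ref{lem-ineq-sing}(III): at each point $P$ the polynomials $g_1,\ldots,g_{i+2}$ form part of a local coordinate system on $\PP_{k+3}$, so the Jacobian of $(f_1,\ldots,f_i)$ at $P$ has block form $[H_i(P)\mid 0]$, where $H_i:=[h_{l,j}]_{l=1,\ldots,i}^{j=1,\ldots,i+2}$ with the convention $h_{l,j}=0$ for $j>\mathfraki(l)$. The singular-locus contribution is therefore
$$
Z_i := \bigl\{\, P\in V(g_1,\ldots,g_{i+2}) \;:\; \rank H_i(P)\le i-1\,\bigr\}.
$$
For generic $h_{l,j}$, Harris' \cite[Prop.~17.25]{h} forces this rank-drop locus to have codimension $(i-(i-1))\cdot((i+2)-(i-1))=3$ inside the $(k+1-i)$-dimensional smooth variety $V(g_1,\ldots,g_{i+2})$, so $\dim Z_i\le k-i-2$, giving codimension $\ge 5$ inside $V(f_1,\ldots,f_i)$.

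Off $V(g_1,\ldots,g_{i+2})$, combine the inductive hypothesis with Bertini. By induction, $\sing(V(f_1,\ldots,f_{i-1}))$ has dimension at most $(k+4-i)-4=k-i$; for generic $f_i$, each of its components not contained in the base locus is cut in one further dimension, contributing a locus of dimension at most $k-i-1$. On the smooth part of $V(f_1,\ldots,f_{i-1})$ lying off the base locus, Bertini applied to the degree-$d_i$ piece of the ideal $\lrr g_1,\ldots,g_{i+2}\rrr$ (which is base-point free on that open set) produces no new singularities for generic $h_{i,j}$. Taken together, these bound $\sing(V(f_1,\ldots,f_i))$ by dimension $k-i-1$, i.e.\ codimension $\ge 4$ in $V(f_1,\ldots,f_i)$, as required.

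The main technical obstacle will be making the Bertini step rigorous: one must verify that as $(h_{i,1},\ldots,h_{i,i+2})$ ranges over polynomials of the prescribed degrees $d_i-e_j$, the resulting family of $f_i$'s fills the entire degree-$d_i$ piece of the ideal $(g_1,\ldots,g_{i+2})$, so that the linear system they span is base-point free precisely on the complement of $V(g_1,\ldots,g_{i+2})$. Since $d_i\ge e_j$ for every $j\le i+2$ by condition~\eqref{eq-cond-one}, and since at any $P$ outside the base locus some $g_j(P)\ne 0$, an evaluation-plus-gradient argument analogous to that in the proof of Lemma~\ref{lem-sing-caseone} shows the linear system separates values and first derivatives at $P$, which is exactly the hypothesis Bertini requires.
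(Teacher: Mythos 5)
Your decomposition of $\sing(V(f_1,\ldots,f_i))$ into the part lying on $V(g_1,\ldots,g_{i+2})$ and the part off it is the same as the paper's, and the off--base-locus step (induction plus Bertini on the regular part of $V(f_1,\ldots,f_{i-1})$) matches the paper's first reduction, modulo an arithmetic slip: ``smooth in codimension four'' gives $\dim\sing(V(f_1,\ldots,f_{i-1}))\le k-i-1$, not $k-i$, and the bound you actually need at the end is codimension $\ge 5$ in $V(f_1,\ldots,f_i)$, not $\ge 4$.

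The genuine gap is the on--base-locus step. You invoke \cite[Prop.~17.25]{h} to conclude that $Z_i=\{P\in V(g_1,\ldots,g_{i+2}):\rank H_i(P)\le i-1\}$ has the expected codimension $3$ ``for generic $h_{l,j}$''. That proposition gives the opposite inequality --- a nonempty determinantal locus has codimension \emph{at most} the expected one --- which is why the paper can use it in Lemma~\ref{lem-ineq-sing}(III) to derive a contradiction, but it cannot be used here to bound the dimension of $Z_i$ from above. More importantly, $H_i$ is not a matrix of independently generic forms: by \eqref{eq-sumfi} one has $h_{l,j}=0$ for $j>\mathfraki(l)$, so the entire upper-right corner of $H_i$ vanishes identically and several of the surviving entries are forced to be constants. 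For such a structured matrix the statement that the rank-drop locus attains the expected codimension is precisely the nontrivial content of the lemma. The paper proves it by hand: it first uses the inductive bound on $\{\rank H_{i-1}\le i-2\}$ to reduce to points where $H_{i-1}$ has full rank, observes that there the last column of $H_i$ forces $h_{i,i+2}=0$, and then exhibits explicit minors (such as $h_{i,i+1}\det(\hat{H}_{i-1})+h_{i-1,i+1}\det(M_1)$, and a second one obtained by deleting the $(i+1)$st and $(i+2)$nd columns) that cut the locus properly, with a residual case in $V(g_1,\ldots,g_{i+2},h_{i,i+2},\det(\hat{H}_{i-1}))$ handled by minors of $H_{i-1}$. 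Your proposal supplies none of this and in effect assumes the conclusion on the base locus; the Bertini step you flag as the main obstacle is in fact the routine part.
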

\begin{proof} We proceed by induction on $i$.

\noindent
[i=1]:  If $\mathfraki(1) \leq 2$, then generic $V(f_1)$ is smooth by Lemma~\ref{lem-sing-caseone}.
Suppose that $\mathfraki(1) = 3$, then 
$$
f_1 = h_{1,1} \cdot g_1 + h_{1,2} \cdot g_2 +  h_{1,3} \cdot g_3 \, .
$$
If \eqref{eq-int-smooth} holds,  we have  $\sing(V(f_1)) \subset V(g_1, g_2, g_3,  h_{1,1}, h_{1,2}, h_{1,3})$
for generic $h_{1,1}, \ldots, h_{1,3}$.

\noindent
[(i-1) $\leadsto$ i]: By Lemma~\ref{lem-sing-caseone} we can assume 
$\mathfraki(i) = i+2$. 
We apply  Bertini on $\reg(\mbox{V}(f_1, \ldots, f_{i-1}))$ to obtain the inclusion
$$
\sing(\mbox{V}(f_1, \ldots, f_i)) \subset \mbox{V}(g_1, \ldots, g_{i+2}) \cup \sing(\mbox{V}(f_1, \ldots, f_{i-1})) \, .
$$
For generic choice of $g_{i+2}$ no component of
$\sing(\mbox{V}(f_1, \ldots, f_{i-1}))$  is contained in   $V(g_{i+2})$. By the induction hypothesis,  
it remains to study the components of $\sing(\mbox{V}(f_1, \ldots, f_{i}))$ that are contained in 
$\mbox{V}(g_1, \ldots, g_{i+2})$.

Recall that we assumed \eqref{eq-int-smooth}, so we have the equality
$$
\sing(V(f_1, \ldots, f_{i})) \cap V(g_1, \ldots,  g_{i+2}) = \{ P \in V(g_1, \ldots,  g_{i+2}) \, : \, \rank(H_i(P)) \leq (i-1) \} \, . 
$$
In particular, the induction hypothesis implies that  for generic $g_{i+2}$ the set
\begin{equation} \label{eq-rank-iminus2}
\{ P \in V(g_1, \ldots,  g_{\mathfraki(i-1)}) \, : \, \rank(H_{i-1}(P)) \leq (i-2) \}
\end{equation}
has codimension at least five in $V(f_1, \ldots, f_{i-1}, f_i)$. Thus, 
we can restrict our attention to the components
of the singular locus that meet the 
set $\{ P \,: \,   \rank(H_{i-1}(P)) = (i-1) \}$. 

Observe that $\mathfraki(i-1) < i+2$, so the  components in question are contained in $V(h_{i,i+2})$.
If $\deg(h_{i,i+2}) = 0$ we can choose a nonzero $h_{i,i+2}$ and the proof is complete.
Otherwise,  we can assume that the hypersurface  $V(h_{i,i+2})$ meets $V(g_1, \ldots,  g_{i+2})$ properly. 

Let $\hat{H}_{i-1} := [h_{l,j}]^{j=1, \ldots,i-1}_{l=1, \ldots,i-1}$. 
Determinant of the minor of $H_i$ obtained by removing the  $i^{th}$ and $(i+2)^{nd}$ column can be written as
\begin{equation} \label{eq-drugiminor}
h_{i,i+1} \cdot \det(\hat{H}_{i-1}) + h_{i-1,i+1} \cdot \det(M_1), 
\end{equation}
where $M_1$ is another  minor of $H_{i}$. One can easily see, that  for generic 
$h_{i,1}, \ldots, h_{i,i-1}$ and  $h_{i,i+1}$ the hypersurface given by  \eqref{eq-drugiminor}
meets $V(g_1, \ldots,  g_{i+2}, h_{i,i+2})$ properly. 
An analogous argument applied to 
the minor  obtained by removing the  $(i+1)^{st}$  and $(i+2)^{\rm nd}$  column from $H_i$
shows that the 
$$
\codim(\sing(V(f_1, \ldots, f_i)) \cap  \{ P \,: \,   \rank(H_{i-1}(P)) = (i-1) \}) \geq 5,
$$ 
and it remains to deal with the components of $\sing(V(f_1, \ldots, f_i))$ that are contained in the set
\begin{equation} \label{eq-last-step}
V(g_1, \ldots,  g_{i+2}, h_{i,i+2}, \det(\hat{H}_{i-1})) \, .
\end{equation}
If $\mathfraki(i-1)=i-1$, then 
$\det(\hat{H}_{i-1})$ is constant and we are done. Otherwise, we can
assume that  \eqref{eq-last-step} is of codimension four and
 use another minor of the matrix $H_{i-1}$
to complete the proof. 
\end{proof}

\begin{lemm} \label{lem-A1}
Assume that  $g_1$, $\ldots$ $g_{k+1}$ satisfy the condition \eqref{eq-int-smooth}  and 
the fourfold
$V(f_1, \ldots, f_{k-1})$ is smooth.
For generic $f_k \in \mathcal{O}_{\PP_{k+3}}(d_k) \otimes \mathcal{I}(S)$ the intersection 
$V(f_1, \ldots, f_k)$ has at most nodes (A$_1$ singularities).
\end{lemm}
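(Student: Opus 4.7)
The plan is to split the analysis according to whether a potential singular point of $X:=V(f_1,\ldots,f_k)$ lies on $S$ or on $W\setminus S$, where $W:=V(f_1,\ldots,f_{k-1})$ is the given smooth fourfold. On $W\setminus S$ the argument is a direct application of Bertini: the base locus of the linear system $|\mathcal{O}_{\PP_{k+3}}(d_k)\otimes\mathcal{I}(S)|$ equals $S$ (at any $P\notin S$ some $g_j(P)\neq 0$ and one may multiply by a generic section of $\mathcal{O}(d_k-e_j)$, which exists since for $\mmS(\ud;\ue)>0$ we have $d_k\geq e_{k+1}$), so for generic $f_k$ the intersection $V(f_k)\cap W$ is smooth off $S$. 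The real content of the lemma is therefore the local analysis near points of $S$.

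Fix $P\in S$. Since $W$ is smooth of dimension four and $S\subset W$ is smooth of codimension two, one can pick two of the restrictions $g_i|_W$ -- say $u:=g_{j_1}|_W$, $v:=g_{j_2}|_W$ -- whose differentials span the conormal of $S$ in $W$ at $P$; complete them to local analytic coordinates $(x_1,x_2,u,v)$ on $W$ with $(x_1,x_2)$ inducing coordinates on $S$. Because $f_k$ vanishes on $S$, one may write locally
\[
f_k|_W \;=\; \alpha\,u \,+\, \beta\,v,
\]
for germs $\alpha,\beta$ of holomorphic functions at $P$. A direct differentiation shows (i) that a critical point of $f_k|_W$ lying on $S$ is exactly a common zero of $\alpha|_S$ and $\beta|_S$, and (ii) that the Hessian of $f_k|_W$ at such a point has, in the above coordinates, the block form
\[
\begin{pmatrix} 0 & J \\ J^{T} & B \end{pmatrix},
\]
where $J$ is the Jacobian matrix of $(\alpha|_S,\beta|_S)$ with respect to $(x_1,x_2)$ and $B$ is a symmetric $2\times 2$ block from the transverse second derivatives. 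Expanding the determinant yields $(\det J)^{2}$, so the point is an ordinary double point precisely when $\det J\neq 0$.

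It remains to show that for generic $f_k$ the common zero set of $\alpha|_S$ and $\beta|_S$ is finite and $\det J$ is non-zero at every point of it. Writing $f_k=\sum_{j=1}^{k+1}h_{k,j}g_j$ and expanding each $g_j|_W$ ($j\neq j_1,j_2$) locally inside the ideal $(u,v)$, the germs $\alpha,\beta$ become explicit polynomial combinations of the coefficients $h_{k,j}$. Independent variation of $h_{k,j_1}$ and $h_{k,j_2}$ then shows that the evaluation map sending $f_k$ to the $1$-jet of $(\alpha|_S,\beta|_S)$ at any prescribed point of $S$ is surjective, and a standard relative Bertini / parameter-space argument over $S$ produces the required simultaneous genericity. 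The main obstacle is precisely this jet-surjectivity step: one has to verify that the auxiliary local choice of the pair $(u,v)$ at $P$ does not obstruct the global argument, which ultimately reduces to global generation of $\mathcal{O}(d_k)\otimes\mathcal{I}(S)$ on $\PP_{k+3}$ by $g_1,\ldots,g_{k+1}$ in the degree range allowed by \eqref{eq-cond-one}--\eqref{eq-cond-three}.
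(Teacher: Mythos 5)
Your decomposition (Bertini off $S$, local analysis along $S$) is the same as the paper's, and your local computation is correct and is in fact the hidden content of the paper's argument: writing $f_k|_W=\alpha u+\beta v$ with $u,v$ local generators of $\mathcal{I}(S)$ on the smooth fourfold, the singular points on $S$ are the common zeros of $(\alpha|_S,\beta|_S)$ and they are nodes exactly where the Jacobian $J$ of this pair is invertible (your block-Hessian determinant $(\det J)^2$ is right). But the step you yourself flag as ``the main obstacle'' --- surjectivity onto $1$-jets, i.e.\ that for \emph{generic} $f_k$ in the incomplete linear system $\mathcal{O}_{\PP_{k+3}}(d_k)\otimes\mathcal{I}(S)$ the induced section of the (twisted) conormal bundle of $S$ in $W$ vanishes transversally --- is left as an assertion that it ``reduces to global generation.'' As written this is a genuine gap: the needed statement is not jet-surjectivity of the full system at an arbitrary point but transversality for the generic member of the specific subspace spanned by $\sum_j h_{k,j}g_j$, and some argument must be supplied.

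The paper closes exactly this gap by a softer route that you could adopt wholesale. Cover $S$ by the open sets $U_{i,j}$ on which $\mathcal{I}(S)\mathcal{O}_{Y,P}=\lrr g_i,g_j\rrr\mathcal{O}_{Y,P}$; by stability of $A_1$ singularities the locus $\mathcal{L}_{i,j}$ of sections $f_k$ whose zero set has at most nodes on $U_{i,j}$ is \emph{open} in the (irreducible) parameter space, so it suffices to exhibit a single member of each $\mathcal{L}_{i,j}$. The two-term section $h_{k,i}g_i+h_{k,j}g_j$ does this: in your notation it has $\alpha=h_{k,i}$ and $\beta=h_{k,j}$ independent generic polynomials, for which finiteness of the common zero locus and $\det J\neq 0$ are immediate. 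Intersecting the finitely many nonempty open sets $\mathcal{L}_{i,j}$ then gives the conclusion for generic $f_k$ without ever proving jet-surjectivity for the whole system. So: your local analysis is sound and slightly more explicit than the paper's, but the global genericity step needs either this openness-plus-witness argument or an actual proof that the image of $H^0(\mathcal{I}_S(d_k))$ generates the relevant rank-two bundle (which again comes down to the $U_{i,j}$ cover together with $d_k\ge e_j$).
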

\begin{proof}
Put $Y := V(f_1, \ldots, f_{k-1})$. Bertini theorem implies that  generic $\mbox{V}(f_k)$ is smooth away from the surface $S$.
We cover the surface in question  with the open sets
$$
U_{i,j} := \{ P \in S \, : \, \mathcal{I}(S) \mathcal{O}_{Y,P} = \lrr g_i, g_j \rrr \mathcal{O}_{Y,P} \} \, ,
$$ 
where $i,j = 1, \ldots, k+1$. By stability property of A$_1$ points, the set
$$
\mathcal{L}_{i,j} := \{ f_k \in \mathcal{O}_{\PP_{k+3}}(d_k) \otimes \mathcal{I}(S) \, : \; V(f_k|_{Y}) 
\mbox{ has at most nodes on } U_{i,j} \}
$$
is open in $\mathcal{O}_{\PP_{k+3}}(d_k) \otimes \mathcal{I}(S)$.
To show that $\mathcal{L}_{i,j}$ is non-empty observe that for generic degree-$(d_k - e_i)$ (resp. degree-$(d_k - e_j)$)  homogeneous polynomial  $h_{k,i}$
(resp. $h_{k,j}$) the threefold given on $Y$ by   vanishing of 
\begin{equation} \label{eq-lij}
h_{k,i} \cdot g_i + h_{k,j} \cdot g_j
\end{equation}
has nodes in $V(g_i, g_j, h_{k,i}, h_{k,j})$  as its only singularities. 
In particular, the polynomial \eqref{eq-lij} belongs to $\mathcal{L}_{i,j}$.
\end{proof}

Now we are in position to
formulate and prove the following proposition that directly implies Thm.~\ref{thm-jest}.  
In Prop.~\ref{thm-existence} we maintain the notation  \eqref{eq-sumfi}.

\begin{prop} \label{thm-existence} Let $\underline{d}:=(d_1, \ldots, d_k)$ and 
$\underline{e}:=(e_1, \ldots, e_{k+1})$  
 be a pair of  non--decreasing sequences of positive integers
that 
satisfy the conditions \eqref{eq-cond-one}, \eqref{eq-cond-two}, \eqref{eq-cond-three}. 
For generic choice of  degree-$e_j$ homogeneous  polynomials $g_j$
and degree-$(d_i - e_j)$ homogeneous polynomials $h_{i,j}$, 
where $j = 1, \ldots, k+1$ and $i = 1, \ldots, \mathfraki(j)$, 

\noindent
\begin{itemize}
\item [(I)]  the intersections  $V(f_{1}, \ldots, f_{s})$ are smooth
  in codimension three for  $s \leq k-1$,
\item [(II)] the threefold   $V(f_{1}, \ldots, f_{k})$  has exactly 
$\mmS(\underline{d}; \underline{e})$ nodes on the surface $S$  as its
only singularities.

\item [(III)] If  $\mmS(\underline{d}; \underline{e}) > 0$, then the surface $S$ is
not a Cartier divisor on the threefold $V(f_{1}, \ldots, f_{k})$.

\end{itemize}
\end{prop}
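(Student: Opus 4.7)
The plan is to assemble parts~(I), (II), and~(III) directly from the lemmata already established in this section together with Proposition~\ref{lem-eq-sing-on-the-surface}. I would begin by fixing a generic choice of polynomials $g_1, \ldots, g_{k+1}$ so that condition~\eqref{eq-int-smooth} is satisfied; this is achieved by iteratively applying Bertini's theorem to the smooth intermediate varieties $V(g_1, \ldots, g_i)$, and in particular makes $S = V(g_1, \ldots, g_{k+1})$ smooth.

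For part~(I), Lemma~\ref{lem-sing-gencase} asserts that for $s \leq k-1$ and generic choice of the $h_{l,j}$, the intersection $V(f_1, \ldots, f_s)$ is smooth in codimension four, a fortiori in codimension three. A useful byproduct is that for $s = k-1$ the ambient variety has dimension four, so smoothness in codimension four means that the singular locus is empty; thus $V(f_1, \ldots, f_{k-1})$ is actually smooth, which is precisely the hypothesis of Lemma~\ref{lem-A1}. For part~(II), I would then invoke Lemma~\ref{lem-A1} to conclude that for generic $f_k$ the threefold $X = V(f_1, \ldots, f_k)$ has at most nodes, all of which lie on $S$---the Bertini step inside that lemma rules out singularities off $S$ by transversality of $V(f_k)$ with the smooth fourfold $V(f_1, \ldots, f_{k-1})$. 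The exact count of nodes on $S$ is then delivered by Proposition~\ref{lem-eq-sing-on-the-surface}, yielding $\mmS(\underline{d}; \underline{e})$.

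Part~(III) is a local argument at any node $P \in \sing(X) \cap S$. Analytically, $X$ is modelled at $P$ by the quadric cone $xy = zw$, whose local class group of Weil divisors is $\mathbb{Z}$ (cf.~Remark~\ref{rem-kufactorial}), generated by the germ of any smooth surface through the vertex. Since $S$ is smooth at $P$, its germ represents a generator (up to sign) of this class group and is therefore not Cartier at $P$; consequently $S$ is not a Cartier divisor on $X$.

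The only genuine subtlety is to ensure that the various genericity open conditions---those from the inductions in Lemmata~\ref{lem-sing-caseone} and~\ref{lem-sing-gencase}, and the additional one from Lemma~\ref{lem-A1}---can be met simultaneously. Each one cuts out a nonempty Zariski-open subset of the appropriate affine space of coefficients; since a finite intersection of nonempty open subsets of an irreducible variety is itself nonempty, a compatible generic choice of all the $g_j$, $h_{i,j}$, and $f_k$ exists. This bookkeeping is the main point to be careful about, but no new geometric input beyond the preceding lemmata is required.
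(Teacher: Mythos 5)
Your proposal is correct and follows essentially the same route as the paper: part (I) from Lemma~\ref{lem-sing-gencase}, part (II) by combining the resulting smoothness of the fourfold $V(f_1,\ldots,f_{k-1})$ with Lemma~\ref{lem-A1} and the node count of Proposition~\ref{lem-eq-sing-on-the-surface}, and part (III) from the presence of a node on the smooth surface $S$ (you merely spell out the local class group argument that the paper leaves implicit).
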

\begin{proof}
Claim (I) follows directly from Lemma~\ref{lem-sing-gencase}. In particular, 
Lemma~\ref{lem-sing-gencase} implies that the fourfold $V(f_1, \ldots, f_{k-1})$ is smooth and we can apply 
Lemma~\ref{lem-A1} to show that all singularities of the threefold  $V(f_1, \ldots, f_k)$ are  nodes. 
In this way part (II) of the theorem is also proven.

Suppose that $\mmS(\underline{d}; \underline{e}) > 0$. By (I) and (II) the assumptions
of Prop.~\ref{lem-eq-sing-on-the-surface} are fulfilled. Consequently, the threefold 
has exactly $\mmS(\underline{d}; \underline{e})$ nodes on the surface $S$ and the
latter cannot be a Cartier divisor.  
\end{proof}

\end{document}